\newcommand{\R}{\mathbb R}
\newcommand{\N}{\mathbb N}
\newtheorem{thm}{Theorem}[section]
\newtheorem{lemma}[thm]{Lemma}
\newtheorem{proposition}[thm]{Proposition}
\newtheorem{df}{Definition}[section]
\theoremstyle{remark}
\newtheorem{rmk}{Remark}
\begin{document}


\title{Zhang's inequality for log-concave functions}

\author[D. Alonso]{David Alonso-Guti\'{e}rrez}
\address{\'Area de An\'alisis Matem\'atico, Departamento de Matem\'aticas, Facultad de Ciencias, Universidad de Zaragoza, Pedro Cerbuna 12, 50009 Zaragoza (Spain), IUMA}
\email[(David Alonso-Guti\'errez)]{alonsod@unizar.es}
\thanks{The first and second authors are partially supported by MINECO Project MTM2016-77710-P, DGA E26\_17R and IUMA,
and the third author is partially supported by Fundaci\'on S\'eneca, Programme in Support of Excellence Groups of the Regi\'on de Murcia, Project 19901/GERM/15, and by MINECO Project MTM2015-63699-P}
\date{\today}

\author[J. Bernu\'es]{Julio Bernu\'es}
\address{\'Area de an\'alisis matem\'atico, Departamento de matem\'aticas, Facultad de Ciencias, Universidad de Zaragoza, Pedro cerbuna 12, 50009 Zaragoza (Spain), IUMA}
\email[(Julio Bernu\'es)]{bernues@unizar.es}

\author[B. Gonz\'alez]{Bernardo Gonz\'alez Merino}
\address{Departamento de An\'alisis Matem\'atico, Facultad de Matem\'aticas, Universidad de Sevilla, Apdo. 1160, 41080-Sevilla, Spain}
\email[Bernardo Gonz\'alez Merino]{bgonzalez4@us.es}
\begin{abstract}
Zhang's reverse affine isoperimetric inequality states that among all convex bodies $K\subseteq\R^n$, the affine invariant quantity $|K|^{n-1}|\Pi^*(K)|$ (where $\Pi^*(K)$ denotes the polar projection body of $K$) is minimized if and only if $K$ is a simplex. In this paper we prove an extension of Zhang's inequality in the setting of integrable log-concave functions, characterizing also the equality cases.
\end{abstract}
\maketitle
\section{Introduction}

Given a convex body (compact, convex with non-empty interior) $K\subseteq\R^n$, its \textit{polar projection body} $\Pi^*(K)$ is the unit ball of the norm given by
$$
\Vert x\Vert_{\Pi^*(K)}:=|x||P_{x^\perp}K|,\qquad x\in\R^n
$$
where $|\cdot|$ denotes both the \textit{Lebesgue measure} (in the suitable space) and the Euclidean norm, and $P_{x^\perp}K$ is the
\textit{orthogonal projection} of $K$  onto the hyperplane orthogonal to $x$. The \textit{Minkowski functional} of a convex body $K$ containing the origin is defined, for every $x\in\R^n$, as $\Vert x\Vert_K:=\inf\{\lambda>0\mid x\in\lambda K\}\in[0,\infty]$. It is a norm if and only if $K$ is centrally symmetric.

 The expression $|K|^{n-1}|\Pi^*(K)|$ is affine invariant and the extremal convex bodies are well known: \textit{Petty's projection inequality} \cite{P} states that the (affine class of the) $n$-dimensional Euclidean ball,  $B_2^n$, is the only maximizer and \textit{Zhang's inequality} \cite{Z1} (see also \cite{GZ} and \cite{AJV}) proves that the (affine class of the)  $n$-dimensional simplex $\Delta$ is the only minimizer. That is, for any convex body $K\subseteq\R^n$,
$$
\frac{\binom{2n}{n}}{n^n}=|\Delta|^{n-1}|\Pi^*(\Delta)|\leq|K|^{n-1}|\Pi^*(K)|\leq |B_2^n|^{n-1}|\Pi^*(B_2^n)|=\left(\frac{|B_2^n|}{|B_2^{n-1}|}\right)^n.
$$

In recent years, many relevant geometric inequalities have been extended to the more general context of \textit{log-concave functions}, i.e., functions $f:\R^n\to[0,\infty)$ of the form $f(x)=e^{-v(x)}$ with $v:\R^n\to (-\infty,\infty]$ a convex function. The set of all log-concave and integrable functions in $\R^n$ will be denoted by $\mathcal F(\R^n)$. This family of functions contains the set of convex bodies via the natural injections $K\to\chi_K$ (where $\chi_K$ denotes the characteristic function of $K$) or $K\to e^{-\Vert\cdot\Vert_K}$ (when $K$ is a convex body containing the origin). We refer the reader to \cite{KM} or \cite{C} and the references therein for a quick introduction on this topic.

  The aim of this paper is to provide an extension of Zhang's inequality for every $f\in\mathcal F(\mathbb R^n)$. For that matter, we  define (see \cite{AGJV}, \cite{KM}) for every $f\in\mathcal F(\mathbb R^n)$ the (centrally symmetric) \textit{polar projection body of $f$}, denoted  $\Pi^*(f)$, as
 $$ \Vert x\Vert_{\Pi^*(f)}=2|x|\int_{x^\perp}P_{x^\perp}f(y)dy,\quad x\in\R^n
  $$  where $P_{x^\perp}f:x^\perp\to[0,\infty)$ is \textit{the shadow of $f$}, i.e.,
   $
   P_{x^\perp}f(y):=\max_{s\in\R}f\left(y+s\frac{x}{|x|}\right)
   $.

Petty's projection inequality was extended by Zhang, see \cite{Z2}, to compact domains and it was shown to be equivalent to the so called
\textit{affine Sobolev inequality}, which for log-concave functions in the suitable Sobolev space takes the following form: For every $f\in\mathcal F(\mathbb R^n)\cap W^{1,1}(\R^n)=\left\{f\in\mathcal F(\mathbb R^n)\,:\frac{\partial f}{\partial x_i} \in L^1(\R^n) \,,\,\forall i\right\}$
$$
\Vert f\Vert_{\frac{n}{n-1}}|\Pi^*(f)|^\frac{1}{n}\leq\frac{|B_2^n|}{2|B_2^{n-1}|},
$$
with equality if and only if $f$ is the characteristic function of an ellipsoid.

Our extension of Zhang's inequality has the following form:

\begin{thm}\label{thm:FunctionalZhang} Let $f\in\mathcal F(\mathbb R^n)$. Then,
$$
\int_{\R^n}\int_{\R^n}\min\left\{f(y),f(x)\right\}dydx\leq 2^nn! \Vert f\Vert_\infty\Vert f\Vert_1^{n+1}\left|\Pi^*\left(f\right)\right|.
$$ Moreover, if $\Vert f\Vert_\infty=f(0)$ then equality holds if and only if $\frac{f(x)}{\Vert f\Vert_\infty}=e^{-\Vert x\Vert_{\Delta}}$ for some n-dimensional simplex $\Delta$ containing the origin.
\end{thm}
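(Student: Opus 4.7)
The plan is to reduce the inequality to a family of geometric statements on the convex level sets $K_t:=\{f\ge t\}$ and then close by a weighted Berwald-type bound. First I would translate $f$ so that $\|f\|_\infty=f(0)$ (which leaves the LHS, $\|f\|_1$, and $\Pi^*(f)$ invariant) and so that $0\in K_t$ for every $t\in(0,\|f\|_\infty]$. The layer-cake formula then encodes the three quantities in the theorem through the family $\{K_t\}$:
\begin{align*}
\int_{\R^n}\int_{\R^n}\min\{f(x),f(y)\}\,dx\,dy &= \int_0^{\|f\|_\infty}|K_t|^2\,dt,\\
\|f\|_1 = \int_0^{\|f\|_\infty}|K_t|\,dt,\qquad \|u\|_{\Pi^*(f)} &= 2\int_0^{\|f\|_\infty}|P_{u^\perp}K_t|\,dt,\ u\in S^{n-1}.
\end{align*}

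Next I would apply the polar/chord identity behind the Gardner--Zhang proof of classical Zhang: for each convex body $K\ni 0$, integrating the covariogram $g_K(z)=|K\cap(K-z)|$ in spherical coordinates gives $|K|^2=\frac{1}{n(n+1)}\int_{S^{n-1}}\int_{P_{u^\perp}K}\ell_K(u,y)^{n+1}\,dy\,d\sigma(u)$, where $\ell_K(u,y)$ is the length of the chord $K\cap(y+\R u)$. Applying this to each $K_t$, integrating in $t$, and interchanging the order of integration, the LHS becomes
\[
\frac{1}{n(n+1)}\int_{S^{n-1}}\int_{D_u}\ell(u,y,t)^{n+1}\,dy\,dt\,d\sigma(u),
\]
where $D_u:=\{(y,t)\in u^\perp\times[0,\infty):t\le P_{u^\perp}f(y)\}$ is the subgraph of the shadow. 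Fubini yields $\int_{D_u}\ell\,dy\,dt=\|f\|_1$ for every $u$ and $|D_u|=\tfrac{1}{2}\|u\|_{\Pi^*(f)}$; combined with $|\Pi^*(f)|=(n\cdot 2^n)^{-1}\int_{S^{n-1}}|D_u|^{-n}\,d\sigma(u)$, the theorem collapses, after integration in $u$, to the per-direction bound
\[
\int_{D_u}\ell(u,y,t)^{n+1}\,dy\,dt\le(n+1)!\,\frac{\|f\|_1^{n+1}}{|D_u|^n}
\]
(up to the appropriate power of $\|f\|_\infty$ needed to match the stated RHS).

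To establish this per-direction bound, I would perform the exponential change of variable $s=-\log t$. Since $\log P_{u^\perp}f$ is concave (the shadow of a log-concave function is log-concave), the set $\widetilde D_u:=\{(y,s):s\ge-\log P_{u^\perp}f(y)\}$ is the epigraph of a convex function, hence convex. Writing $\Phi(y,r):=-\log f(y+ru)$, which is jointly convex on $u^\perp\times\R$, the transformed chord length $\widetilde\ell(y,s):=\ell(u,y,e^{-s})$ is the $1$-dimensional Lebesgue measure of the sub-level set $\{r:\Phi(y,r)\le s\}$, and $1$D Brunn--Minkowski then forces $\widetilde\ell$ to be concave jointly on $\widetilde D_u$. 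A weighted Berwald inequality for concave functions on a convex set against the log-concave weight $e^{-s}ds$ of the form
\[
\int_{\widetilde D_u}\widetilde\ell^{\,n+1}e^{-s}\,dy\,ds\le(n+1)!\,\frac{\left(\int_{\widetilde D_u}\widetilde\ell\,e^{-s}\,dy\,ds\right)^{n+1}}{\left(\int_{\widetilde D_u}e^{-s}\,dy\,ds\right)^n}
\]
would then yield the required inequality after pulling back through the substitution.

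Equality in the weighted Berwald would force $\widetilde\ell$ to be linear on a simplicial cone, which, unwinding the substitution and the chord representation, pins $f$ down to $f(x)/\|f\|_\infty=e^{-\|x\|_\Delta}$ for some simplex $\Delta\ni 0$, matching the claimed equality case under the hypothesis $\|f\|_\infty=f(0)$. The main obstacle I anticipate is this weighted Berwald inequality with sharp constant $(n+1)!$: classical Lebesgue Berwald in dimension $n$ yields only the weaker constant $\binom{2n+1}{n}$, and the improvement crucially uses the log-concave weight $e^{-s}$, which morally absorbs one Lebesgue dimension. Proving this sharp weighted version directly, or alternatively circumventing the transformation by applying classical Zhang to Ball's body $K_n(f)$ together with an explicit inequality relating $\Pi^*(K_n(f))$ to $\Pi^*(f)$, is where the bulk of the technical work lies.
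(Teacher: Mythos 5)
Your reduction is correct as far as it goes, and it essentially reproduces the authors' own alternative argument rather than the main one: they express $\rho_{\widetilde{K}_p(g)}^p(u)$ of the functional covariogram $g$ as $\frac{1}{(p+1)g(0)}\int_0^\infty e^{-t}\int_{P_{u^\perp}K_t(f)}|K_t(f)\cap(y+\langle u\rangle)|^{p+1}\,dy\,dt$ and compare different exponents $p$ by a Berwald-type monotonicity, exactly your per-direction chord bound in the exponential parametrization; the paper's main proof instead represents $2\Vert f\Vert_1\Pi^*(f)$ as $\bigcap_{0<\lambda<\lambda_0}\lambda^{-1}K_{-\log(1-\lambda)}(g)$ and proves the inclusion $\frac{t}{(n!)^{1/n}}\widetilde{K}_n(g)\subseteq K_t(g)$ for Ball's body of $g$. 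Your Fubini identities and the polar chord identity all check out, and your bookkeeping closes to $2^nn!\Vert f\Vert_1^{n+1}|\Pi^*(f)|$ with no compensating power of $\Vert f\Vert_\infty$, which is the homogeneity-consistent form (both sides then scale linearly under $f\mapsto cf$). The genuine gap is the step you defer: the weighted Berwald inequality $\int_{\widetilde D_u}\widetilde\ell^{\,n+1}e^{-s}\leq(n+1)!\,\bigl(\int_{\widetilde D_u}\widetilde\ell\, e^{-s}\bigr)^{n+1}/\bigl(\int_{\widetilde D_u}e^{-s}\bigr)^{n}$ for a concave function on an epigraph-type region with exponential weight in the unbounded direction is not an off-the-shelf fact; it is the entire analytic content of this route. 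It is true, and the paper proves precisely this monotonicity for all $p\in(-1,\infty)$ by a layer-cake rearrangement of the concave function to one equidistributed with a function of $\rho_{K_t}$ alone, reducing to a one-dimensional Berwald lemma for concave non-decreasing profiles against $e^{-r}dr$ --- but that is the bulk of the work and cannot be waved through.

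The equality case is a second, more substantive gap. Equality in a Berwald-type comparison of $L^{p_1}$ and $L^{p_2}$ norms pins down the distribution function of $\widetilde\ell$ (it must match that of a linear profile after rearrangement), not $\widetilde\ell$ itself. Indeed, for the extremal $f=e^{-\Vert\cdot\Vert_\Delta}$ the chord function of a simplex in a generic direction is piecewise linear but not linear (already for a triangle and a direction parallel to no edge), so ``equality forces $\widetilde\ell$ linear on a simplicial cone'' is false as stated and cannot be the mechanism. In the paper the simplex emerges much more indirectly: equality forces the covariogram to satisfy $g(\lambda x)=g(0)^{1-\lambda}g(x)^{\lambda}$ along every ray, and a delicate analysis of the equality cases of Brunn--Minkowski for the sets $K_t\cap(x+K_t)$ then shows $f/\Vert f\Vert_\infty=e^{-\Vert\cdot\Vert_K}$ with $|K-K|=\binom{2n}{n}|K|$, so that $K$ is identified as a simplex only through the equality case of the Rogers--Shephard inequality. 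None of that structure is visible from your sketch, so the characterization of equality would need to be rebuilt from scratch.
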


\begin{rmk}
The inequality in Theorem \ref{thm:FunctionalZhang} is affine invariant, i.e., the inequality does not change under compositions of $f$ with affine transformations of $\R^n$.
\end{rmk}


\begin{rmk}\label{rmk:RecoversZhang} Theorem \ref{thm:FunctionalZhang} extends Zhang's inequality. Indeed, if
	$f(x)=e^{-\Vert x\Vert_K}$ for some convex body $K$ containing the origin,
then
$$\Pi^*(f)=\frac{1}{2(n-1)!}\Pi^*(K),\hspace{1cm}\Vert f\Vert_1=n!|K|,
$$
	and
\begin{align*}&\int_{\R^{n}}\int_{\R^n}\min\left\{f(y),f(x)\right\}dydx=\int_{\R^{2n}}e^{-\max\{\Vert x\Vert_K,\Vert y\Vert_K\}}dydx\cr
	&\quad\quad\quad=\int_{\R^{2n}}e^{-\Vert (x,y)\Vert_{K\times K}}dydx=(2n)!|K|^2.
\end{align*}
Thus, Theorem \ref{thm:FunctionalZhang} yields
	$$
	\frac{\binom{2n}{n}}{n^n}\leq |K|^{n-1}|\Pi^*(K)|.
	$$
\end{rmk}

\begin{rmk}Sharp
lower and upper bounds of the left hand side of the inequality in Theorem \ref{thm:FunctionalZhang} in terms of the integral
of $f$  are known (see Lemmas 2.3 and 2.9 in \cite{AAGJV}, respectively).
 \end{rmk}

The paper is structured as follows. In Section \ref{sec:NotationPreli} we introduce some notation and preliminary results. A crucial role in the proof of Theorem \ref{thm:FunctionalZhang} will be played by the functional form of the covariogram function, that we shall denote $g$, associated to any  $f\in\mathcal{F}(\R^n)$. Recall that in the geometric setting the covariogram function of a convex body $K$ is given by $\R^n\ni x\to |K\cap (x+K)|$. In this section we shall define and study the basic properties of its functional version $g$.

 In Section \ref{sec:IneqFuncZhang} we prove the inequality in Theorem \ref{thm:FunctionalZhang}. The proof will rely on the following two facts: First, we will show that the polar projection body of $f\in\mathcal{F}(\R^n)$ can be expressed in terms of dilations of the level sets of $g$. This can be seen as an extension of the corresponding geometric result (see \cite[Theorem 1]{SCH} and \cite[Propositions 4.1 and 4.3]{AJV}) where the polar projection body of a convex body appears as the limit of suitable dilations of the level sets of the covariogram function. Second, we will prove a sharp relation (by inclusion) between the level sets of $g$ and a convex body in the celebrated family of bodies introduced by Ball (cf.~\cite[Pg.~74]{B}). The proof of such inclusion, that we state in full generality, follows ideas from \cite[Lemmas 2.1 and 2.2]{KM}.

 In Section \ref{sec:EqFunctionalZhang} we characterize the equality cases. We first show that the equality in Theorem \ref{thm:FunctionalZhang} holds if and only if the function $g$ is log-linear on every 1-dimensional linear ray starting at 0, i.e., for every $x\in\R^n$ and every $\lambda\in[0,1]$, $g(\lambda x)=g(0)^{1-\lambda} g(x)^\lambda$ and then prove that such condition implies that $f$ has to be as in the statement of the theorem.

\section{Notation and preliminaries}\label{sec:NotationPreli}

$S^{n-1}$ is the Euclidean unit sphere in $\R^n$ and $\sigma$ denotes the uniform probability measure on $S^{n-1}$.
If the origin is in the interior of $K$, the function $\rho_K\colon S^{n-1}\to [0,+\infty)$ given by $\rho_K(u)=\sup\{\lambda\ge 0\mid \lambda u\in K\}$ is the radial function of $K$. It extends to $\R^{n}\setminus\{0\}$ via $t\rho_K(tu)=\rho_K(u)$, for any $t>0,u\in S^{n-1}$. The volume of $K$ is given by $$|K|=|B_2^n|\int_{S^{n-1}}\rho^n_K(u) d\sigma(u)$$ and the boundary of $K$ will be denoted by $\partial K$.

Throughout the paper, $f:\R^n\to[0,+\infty)$ will always denote a (non identically null) log-concave integrable function.
 Recall that such $f$ is said to be log-concave if it can be written as $\frac{f}{\Vert f\Vert_\infty}=e^{-v}$ where $v:\R^n\to(-\infty,+\infty]$ is convex or, equivalently if for every $x,y\in\R^n, 0<\lambda<1$, $f(\lambda x+(1-\lambda)y)\ge (f(x))^{\lambda}(f(y))^{1-\lambda}$.  It is well known that $f$ is then continuous in the interior of its support, $\textrm{int}(\textrm{supp} f)$, and we denote its supremum as $\Vert f\Vert_{\infty}$.
Since Theorem \ref{thm:FunctionalZhang} does not depend on the values of $f$ on the boundary of $\textrm{supp} f$ we shall assume, without loss of generality, that $f$ is continuous on its support.
 For any $t\in[0,\infty)$ we denote the level sets of $v$ by
 $$
 K_t(f)=\{x\in\R^n\,:\, f(x)\geq e^{-t}\Vert f\Vert_\infty\}=\{x\in\R^n\,:\, v(x)\le t\}.
 $$

 Since we assume that $f$ is continuous on its support, $K_t(f)$ is a convex body for all $t>0$. These and other basic facts on convex bodies and log-concave functions used in the paper can be found in \cite{BGVV}.

 We will use the following definition of the polar projection body of $f$ which involves level sets, equivalent to the one stated in the introduction (see \cite[Proposition 4.1]{AGJV}).

   \begin{df}\label{projection}
  Let $f\in\mathcal F(\mathbb R^n)$. The polar projection body of $f$, denoted as $\Pi^*(f)$, is the  unit ball of the norm given by
  $$\Vert x\Vert_{\Pi^*(f)}:=2|x|\Vert f\Vert_{\infty}\!\int_{0}^{\infty}\!|P_{x^\perp}K_t(f)|\ e^{-t}dt=
 2\Vert f\Vert_{\infty}\int_{0}^{\infty}\!\Vert x\Vert_{\Pi^*(K_t(f))}e^{-t}dt. $$
   \end{df}


 \begin{rmk}
 	 If $f=\chi_K$ is the characteristic function of a convex body $K$, then $\Pi^*(f)=\frac{1}{2}\Pi^*(K)$ and, as mentioned in Remark \ref{rmk:RecoversZhang}, if $f(x)=e^{-\Vert x\Vert_K}$, then $\Pi^*(f)=\frac{1}{2(n-1)!}\Pi^*(K)$.
 \end{rmk}

We start by associating a function $g$ to any function $f\in\mathcal{F}(\R^n)$. Such function can be regarded as the functional version of the covariogram functional. We collect its properties in the following lemma
\begin{lemma}\label{lem:g(x)}
Let $f\in\mathcal F(\mathbb R^n)$. Then the function $g:\R^n\to\R$ defined by
$$
g(x):=\int_0^\infty e^{-t}|K_t(f)\cap(x+ K_t(f))|dt=\int_{\R^n}\min\left\{\frac{f(y)}{\Vert f\Vert_\infty},\frac{f(y-x)}{\Vert f\Vert_\infty}\right\}dy
$$
is even, log-concave, $0\in\textrm{int}(\textrm{supp}\ g)$ with $\displaystyle \Vert g\Vert_\infty=g(0)=\int_0^\infty e^{-t}|K_t(f)|dt=\int_{\R^n}\frac{f(x)}{\Vert f\Vert_\infty}dx>0$, and $\displaystyle{\int_{\R^n}g(x)dx=\int_{\R^n}\int_{\R^n}\min\left\{\frac{f(y)}{\Vert f\Vert_\infty},\frac{f(x)}{\Vert f\Vert_\infty}\right\}dydx}$.
\end{lemma}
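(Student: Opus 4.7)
The plan is to verify the six assertions, using whichever of the two integral representations of $g$ is more convenient at each step, and to reduce the log-concavity to Pr\'ekopa's theorem.

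First I would establish the equivalence of the two integral representations via the layer cake formula. For fixed $x$ the function $y\mapsto \min\{f(y),f(y-x)\}/\Vert f\Vert_\infty$ takes values in $[0,1]$, and its superlevel set at height $e^{-t}$ equals $K_t(f)\cap(x+K_t(f))$. The substitution $s=e^{-t}$ then converts $\int_0^1|\{h\ge s\}|ds$ into $\int_0^\infty e^{-t}|K_t(f)\cap(x+K_t(f))|dt$. Setting $x=0$ gives the claimed formula for $g(0)$; since $\min\{f(y),f(y-x)\}\le f(y)$, we also get $g(x)\le g(0)=\Vert f\Vert_1/\Vert f\Vert_\infty$, so $\Vert g\Vert_\infty=g(0)$, and this is strictly positive because $f\not\equiv 0$.

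Evenness follows at once from the change of variables $z=y-x$ in the second representation applied to $g(-x)$. The identity $\int g(x)\,dx=\int\int\min\{f(y),f(x)\}/\Vert f\Vert_\infty\,dy\,dx$ is obtained by Fubini followed, for fixed $y$, by the same substitution $z=y-x$ inside the $x$-integral.

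The substantive property is log-concavity, which I would derive from Pr\'ekopa's theorem by viewing $g$ as a marginal. On $\R^n\times\R^n$ the map $(x,y)\mapsto f(y)$ is log-concave (it is constant in $x$) and $(x,y)\mapsto f(y-x)$ is log-concave as the composition of $f$ with a linear map. The pointwise minimum of two log-concave functions is log-concave, since $\min\{e^{-u},e^{-v}\}=e^{-\max\{u,v\}}$ and the pointwise maximum of convex functions is convex. Hence the integrand $(x,y)\mapsto\min\{f(y),f(y-x)\}/\Vert f\Vert_\infty$ is log-concave on $\R^{2n}$, and Pr\'ekopa's theorem shows that its $x$-marginal $g$ is log-concave.

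Finally, for $0\in\textrm{int}(\supp g)$, I would use the continuity of $f$ on the interior of its (convex) support: choose $y_0\in \textrm{int}(\supp f)$ together with $\varepsilon>0$ and $c>0$ such that $f\ge c$ on $B(y_0,\varepsilon)$. Then for $|x|<\varepsilon/2$ and $y\in B(y_0,\varepsilon/2)$ both $y$ and $y-x$ lie in $B(y_0,\varepsilon)$, so $g(x)\ge c\,|B(y_0,\varepsilon/2)|/\Vert f\Vert_\infty>0$ throughout a neighbourhood of the origin. No step presents a real obstacle; the only non-trivial external input is Pr\'ekopa's theorem used in the log-concavity step.
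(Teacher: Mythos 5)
Your proof is correct. The bookkeeping steps (the layer-cake/Fubini identification of the two representations, evenness via the substitution $z=y-x$, the double-integral identity, and the neighbourhood argument for $0\in\mathrm{int}(\mathrm{supp}\, g)$) match the paper's in substance, and your derivation of $\Vert g\Vert_\infty=g(0)$ from the pointwise bound $\min\{f(y),f(y-x)\}\le f(y)$ is if anything more direct than the paper's, which deduces it from evenness plus log-concavity. The one genuinely different step is log-concavity: the paper works with the level-set representation $g(x)=\int_0^\infty e^{-t}|K_t(f)\cap(x+K_t(f))|\,dt$, observes the inclusion $K_t\cap(x+K_t)\supseteq(1-\lambda)\bigl(K_{t_1}\cap(x_1+K_{t_1})\bigr)+\lambda\bigl(K_{t_2}\cap(x_2+K_{t_2})\bigr)$ for $x=(1-\lambda)x_1+\lambda x_2$, $t=(1-\lambda)t_1+\lambda t_2$, applies Brunn--Minkowski to get log-concavity of $e^{-t}|K_t\cap(x+K_t)|$ jointly in $(x,t)$, and then uses Pr\'ekopa--Leindler to integrate out $t$. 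You instead observe that $(x,y)\mapsto\min\{f(y),f(y-x)\}$ is log-concave on $\R^{2n}$ (a maximum of convex functions in the exponent, each a convex function composed with a linear map) and invoke Pr\'ekopa's marginal theorem. Both routes are valid and of essentially the same depth, since Pr\'ekopa's theorem is itself a consequence of Pr\'ekopa--Leindler; yours is shorter and bypasses the level sets entirely, while the paper's keeps the argument phrased through the bodies $K_t(f)$, which is the language it needs for the subsequent lemmas (in particular Lemma \ref{lem:K_g} and the equality analysis). The only implicit hypothesis you rely on in the final step --- that $\mathrm{supp}\,f$ has nonempty interior so that a ball $B(y_0,\varepsilon)$ with $f\ge c>0$ exists --- is the same nondegeneracy the paper assumes in asserting $g(0)=\int f/\Vert f\Vert_\infty>0$, so no gap arises.
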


\begin{proof}
	
	By Fubini's theorem, for any $x\in\R^n$ we have
	\begin{equation*}
     \begin{split}
		\int_0^\infty e^{-t}|K_t(f)\cap(x+ K_t(f))|dt & =\int_{\R^n}\int_{-\log\min\left\{\frac{f(y)}{\Vert f\Vert_\infty},\frac{f(y-x)}{\Vert f\Vert_\infty}\right\}}^\infty e^{-t}dtdx \\
		& = \int_{\R^n}\min\left\{\frac{f(y)}{\Vert f\Vert_\infty},\frac{f(y-x)}{\Vert f\Vert_\infty}\right\}dy.
	  \end{split}
    \end{equation*}
	Consequently, using Fubini's theorem and a change of variables,
$$
\int_{\R^n}g(x)dx=\int_{\R^n}\int_{\R^n}\min\left\{\frac{f(y)}{\Vert f\Vert_\infty},\frac{f(x)}{\Vert f\Vert_\infty}\right\}dydx.
$$
	In order to prove the log-concavity of $g$, let $x_1,x_2\in\R^n$, $t_1,t_2\in[0,\infty)$, $0\leq\lambda\leq1$ and write $x=(1-\lambda)x_1+\lambda x_2$ and $t=(1-\lambda)t_1+\lambda t_2$. Clearly,
	$$
	K_t\cap(x+ K_t)\supseteq(1-\lambda)(K_{t_1}\cap(x_1+K_{t_1}))+\lambda (K_{t_2}\cap(x_2+K_{t_2})).
	$$
By Brunn-Minkowski inequality,  \cite[Theorem 1.2.1]{BGVV},
	$$
	|K_t\cap(x+ K_t)|\geq|K_{t_1}\cap(x_1+K_{t_1})|^{1-\lambda}|K_{t_2}\cap(x_2+K_{t_2})|^{\lambda}.
	$$
	Thus,
	$$
	e^{-t}|K_t\cap(x+ K_t)|\geq (e^{-t_1}|K_{t_1}\cap(x_1+K_{t_1})|)^{1-\lambda}(e^{-t_2}|K_{t_2}\cap(x_2+K_{t_2})|)^{\lambda}
	$$
	and, by Pr\'ekopa-Leindler inequality, \cite[Theorem 1.2.3]{BGVV},
	\begin{align*}
		&g((1-\lambda)x_1+\lambda x_2)=\int_0^\infty e^{-t}|K_t\cap((1-\lambda)x_1+\lambda x_2+ K_t)|dt\ge\cr
		&\quad\quad\quad\geq\left(\int_0^\infty e^{-t}|K_t\cap(x_1+ K_t)|dt\right)^{1-\lambda}\left(\int_0^\infty e^{-t}|K_t\cap(x_2+ K_t)|dt\right)^{\lambda}\cr
		&\quad\quad\quad=g(x_1)^{1-\lambda}g(x_2)^\lambda.
	\end{align*}

Now, for any $t\in[0,\infty)$, $K_t(f)\cap(x+K_t(f))=x+ (K_t(f)\cap(-x+K_t(f)))$ and so $|K_t(f)\cap(x+K_t(f))|=|K_t(f)\cap(-x+K_t(f))|$. Therefore
$g(x)=g(-x)$.
Consequently, $\Vert g\Vert_\infty=g(0)$ and its value is
$$
g(0)=\int_0^\infty e^{-t}|K_t(f)|dt=\int_{\R^n}\frac{f(x)}{\Vert f\Vert_\infty}dx>0.
$$
Finally, there exists $\varepsilon>0$ such that if $|x|<\varepsilon$ then $K_1(f)\cap(x+ K_1(f))$ is a non-empty convex body and has positive volume. Thus, if $|x|<\varepsilon$ then for every $t>1$ $K_t(f)\cap(x+ K_t(f))$ has positive volume and then $g(x)>0$. Thus $0\in\textrm{int}(\textrm{supp}g)$.
\end{proof}

Let $g\in\mathcal F(\mathbb R^n)$ be a function
such that $g(0)>0$ and let $p>0$.
The following important family of convex bodies was introduced  by K. Ball in \cite[pg.~74]{B}. We denote
$$
\widetilde{K}_p(g):=\left\{x\in\R^n\,:\, \int_0^\infty g(rx)r^{p-1}dr\geq\frac{g(0)}{p}\right\}.
$$
It follows from the definition that the radial function of $\widetilde{K}_p(g)$ is given by
$$
\rho^p_{\widetilde{K}_p(g)}(u)=\frac{1}{g(0)}\int_0^\infty pr^{p-1}g(rx)dr.
$$
 We will make use of the following well-known relation between the Lebesgue measure of $\widetilde{K}_n(g)$ and the integral of $g$.

\begin{lemma}[\cite{B}]\label{VolumeAndIntegral}
	Let $g\in\mathcal F(\mathbb R^n)$ be such that $g(0)>0$. Then
	$$
	|\widetilde{K}_n(g)|=\frac{1}{g(0)}\int_{\R^n}g(x)dx.
	$$
\end{lemma}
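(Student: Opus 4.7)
The plan is to compute $|\widetilde{K}_n(g)|$ directly in polar coordinates, combining the volume formula $|K|=|B_2^n|\int_{S^{n-1}}\rho_K^n(u)d\sigma(u)$ recorded in the Notation section with the explicit formula
$$\rho^n_{\widetilde{K}_n(g)}(u)=\frac{n}{g(0)}\int_0^\infty r^{n-1}g(ru)dr,\qquad u\in S^{n-1},$$
for the radial function of $\widetilde{K}_n(g)$ displayed just before the statement. The proof then reduces to a one-line Fubini computation once boundedness of $\widetilde{K}_n(g)$ is confirmed.

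First, I would verify that $\widetilde{K}_n(g)$ is indeed a bounded star-shaped set with the radial function above. The formula itself is a direct consequence of the defining condition: the change of variables $s=r\lambda$ gives $\int_0^\infty g(r\lambda u)r^{n-1}dr=\lambda^{-n}\int_0^\infty g(su)s^{n-1}ds$, so that $\lambda u\in\widetilde{K}_n(g)$ iff $\lambda^n\leq\rho_{\widetilde{K}_n(g)}^n(u)$. Finiteness of the radial function in every direction follows from $g\in\mathcal F(\R^n)$ with $g(0)>0$: for each $u\in S^{n-1}$ the one-dimensional slice $r\mapsto g(ru)$ is log-concave and positive at $r=0$, and it must be integrable on $(0,\infty)$, since otherwise a tube-like argument would force $\int_{\R^n}g=\infty$. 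A log-concave function on a ray with a positive value at $0$ and finite integral decays exponentially, so $\int_0^\infty r^{n-1}g(ru)dr<\infty$ for every $u$.

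Combining the two formulas and applying Fubini,
\begin{align*}
|\widetilde{K}_n(g)| &=|B_2^n|\int_{S^{n-1}}\rho_{\widetilde{K}_n(g)}^n(u)\,d\sigma(u)\\
&=\frac{n|B_2^n|}{g(0)}\int_{S^{n-1}}\int_0^\infty r^{n-1}g(ru)\,dr\,d\sigma(u)\\
&=\frac{1}{g(0)}\int_{\R^n}g(x)\,dx,
\end{align*}
where the last equality is the standard polar decomposition of Lebesgue measure on $\R^n$ with $\sigma$ the uniform probability measure on $S^{n-1}$ (whose total surface area is $n|B_2^n|$). This is the asserted identity.

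Since each step is a routine manipulation, no part of the argument poses a substantial obstacle; the only item that requires a moment of care is the boundedness of $\widetilde{K}_n(g)$ in the first paragraph, and that is by now a standard observation for integrable log-concave functions with a positive value at the origin.
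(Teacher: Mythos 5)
Your proof is correct and follows essentially the same route as the paper: the paper's proof is exactly the one-line polar-coordinates computation combining the volume formula with the radial function of $\widetilde{K}_n(g)$, which it treats as immediate from the definition. The extra verification you give of the radial-function formula and of boundedness is sound but not needed beyond what the paper already records just before the statement.
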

\begin{proof}
	Integrating in polar coordinates we have that
	\begin{eqnarray*}
	|\widetilde{K}_n(g)|&=&|B_2^n|\int_{S^{n-1}}\rho^n_{\widetilde{K}_n(g)}(u)d\sigma(u)=|B_2^n|\int_{S^{n-1}}\frac{n}{g(0)}\int_0^\infty r^{n-1}g(rx)drd\sigma(u)\cr
	&=&\frac{1}{g(0)}\int_{\R^n}g(x)dx.
\end{eqnarray*}
\end{proof}

\section{Proof of the inequality in Theorem \ref{thm:FunctionalZhang}}\label{sec:IneqFuncZhang}

 We split the main idea of the proof in two parts. We first prove, Lemma \ref{lem:K_g}, that $\Pi^*(f)$ equals the intersection of suitable dilations of the level sets ${K}_t(g)$, with $g$ as defined in the previous section. We then show a sharp relation between Ball's convex body $\widetilde{K}_n(g)$ and the level set $K_t(g)$, see Lemma \ref{Inclusion}. Such a relation holds not only for the covariogram function $g$ but for a larger class of log-concave functions.

\begin{lemma}\label{lem:K_g}
Let $f\in\mathcal F(\mathbb R^n)$ and let $g:\R^n\to\R$ be the function
$$
g(x)=\int_0^\infty e^{-t}|K_t(f)\cap(x+ K_t(f))|dt.
$$
Then for every $0<\lambda_0<1$
$$
\bigcap_{0<\lambda<\lambda_0}\frac{K_{-\log(1-\lambda)}(g)}{\lambda}=2\Vert f\Vert_1\,\,\Pi^*(f).
$$
\end{lemma}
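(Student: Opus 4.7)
The plan is to reformulate both sides so they reduce to the same inequality on $x$, using a chord-length expression for $g(0) - g(\lambda x)$. From Lemma~\ref{lem:g(x)}, $\Vert g\Vert_\infty = g(0) = \Vert f\Vert_1/\Vert f\Vert_\infty$, so the level-set condition reads
$$
x \in \frac{K_{-\log(1-\lambda)}(g)}{\lambda} \iff g(\lambda x) \geq (1-\lambda)g(0) \iff \frac{g(0)-g(\lambda x)}{\lambda} \leq g(0),
$$
while Definition~\ref{projection} rewrites the other side as
$$
x \in 2\Vert f\Vert_1 \Pi^*(f) \iff |x| \int_0^\infty e^{-t} |P_{x^\perp} K_t(f)|\,dt \leq g(0).
$$

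Next I would write $g(0) - g(\lambda x)$ slice by slice. Setting $u = x/|x|$ and decomposing $\R^n = u^\perp \oplus \R u$, I parametrise each fibre $K_t(f) \cap (y + \R u)$ as an interval of length $\ell_t(y) \geq 0$ (with $\ell_t(y) = 0$ for $y \notin P_{u^\perp} K_t(f)$). The one-dimensional identity $|I \setminus (I + s)| = \min(s, |I|)$, combined with Fubini applied to $\R^n = u^\perp \oplus \R u$, yields
$$
|K_t(f)| - |K_t(f) \cap (\lambda x + K_t(f))| = \int_{u^\perp} \min(\lambda |x|, \ell_t(y))\, dy.
$$
Multiplying by $e^{-t}$, integrating in $t$, and dividing by $\lambda$ gives
$$
\frac{g(0) - g(\lambda x)}{\lambda} = |x| \int_0^\infty e^{-t} \int_{u^\perp} \min\!\left(1, \frac{\ell_t(y)}{\lambda|x|}\right) dy\,dt.
$$

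The conclusion now comes from monotonicity in $\lambda$. The integrand above is non-increasing in $\lambda$ and increases to $1$ as $\lambda \to 0^+$ at every $y$ with $\ell_t(y) > 0$, so the outer integral is non-increasing in $\lambda$ and by monotone convergence
$$
\sup_{0<\lambda<\lambda_0} \frac{g(0)-g(\lambda x)}{\lambda} = \lim_{\lambda\to 0^+} \frac{g(0)-g(\lambda x)}{\lambda} = |x|\int_0^\infty e^{-t}|P_{x^\perp}K_t(f)|\,dt.
$$
In particular the supremum does not depend on $\lambda_0 \in (0,1)$. Combining with the first step, $x$ lies in the intersection exactly when this common value is at most $g(0)$, which by the second reformulation is exactly the condition $x \in 2\Vert f\Vert_1 \Pi^*(f)$. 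I expect the main technical point to be the fibrewise chord-length identity and the passage to the limit under the integral; both are routine, but the monotonicity in $\lambda$ is what turns an entire family of inclusions into a single condition and is the real engine of the argument.
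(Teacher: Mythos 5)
Your proof is correct and follows the same overall outline as the paper's: you rewrite membership in $K_{-\log(1-\lambda)}(g)/\lambda$ as the difference-quotient condition $\frac{g(0)-g(\lambda x)}{\lambda}\le g(0)$ and then compare $|K_t(f)\setminus(\lambda x+K_t(f))|$ with $\lambda|x|\,|P_{x^\perp}K_t(f)|$. Where you genuinely diverge is in how that comparison is carried out. The paper uses two separate one-sided estimates: the upper bound $|K_t|-|K_t\cap(\lambda x+K_t)|\le\lambda|x|\,|P_{x^\perp}K_t|$ gives the inclusion $2\Vert f\Vert_1\Pi^*(f)\subseteq\bigcap_\lambda K_{-\log(1-\lambda)}(g)/\lambda$, and the lower bound $|K_t|-|K_t\cap(\lambda x+K_t)|\ge\lambda|x|\,|P_{x^\perp}(K_t\cap(\lambda x+K_t))|$ plus a supremum argument gives the reverse. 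Your fibrewise identity $|K_t|-|K_t\cap(\lambda x+K_t)|=\int_{u^\perp}\min(\lambda|x|,\ell_t(y))\,dy$ is an exact equality that contains both of these bounds at once (note that $\{y:\ell_t(y)\ge\lambda|x|\}=P_{u^\perp}(K_t\cap(\lambda x+K_t))$ up to a null set), makes the monotonicity in $\lambda$ and the value of the limit transparent, and lets both inclusions drop out of a single monotone-convergence computation; it also makes explicit why the intersection does not depend on $\lambda_0$, namely that the sets are nested. The one detail you should spell out is the identification $|\{y\in u^\perp:\ell_t(y)>0\}|=|P_{u^\perp}K_t(f)|$ used when passing to the limit: it holds because every chord of a convex body over a point of the relative interior of its shadow has positive length, so the two sets differ only by a subset of the boundary of $P_{u^\perp}K_t(f)$, which has measure zero.
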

\begin{proof}
For any $0<\lambda<1$ the convex body $\frac{K_{-\log(1-\lambda)}(g)}{\lambda}$ equals
$$\left\{x\in\R^n\,:\,\int_0^\infty e^{-t}|K_t(f)\cap(\lambda x+ K_t(f))|dt\geq(1-\lambda)\int_0^\infty e^{-t}|K_t(f)|dt\right\}$$
or equivalently
$$\left\{x\in\R^n\,:\,\int_0^\infty\!  e^{-t}\frac{|K_t(f)|-|K_t(f)\cap(\lambda |x| \frac{x}{|x|}+ K_t(f))|}{\lambda}dt\leq\int_0^\infty\! e^{-t}|K_t(f)|dt\right\}.
$$

Since $|K_t(f)|-|K_t(f)\cap(\lambda |x| \frac{x}{|x|}+ K_t(f))|\leq \lambda |x| |P_{x^\perp}K_t(f)|$, then
\begin{eqnarray*}
\int_0^\infty e^{-t}\frac{|K_t(f)|-|K_t(f)\cap(\lambda |x| \frac{x}{|x|}+ K_t(f))|}{\lambda}dt&\leq&|x|\int_0^\infty e^{-t}|P_{x^\perp}K_t(f)|dt\cr &=&\frac{\Vert x\Vert_{\Pi^*(f)}}{2\Vert f\Vert_\infty}
\end{eqnarray*}
and we have that if $$\Vert x\Vert_{\Pi^*(f)}\leq 2\Vert f\Vert_\infty\int_0^\infty e^{-t}|K_t(f)|dt=2\int_{\R^n}f(x)dx$$
then $x\in K_{-\log(1-\lambda)}(g)/\lambda$. Thus
$$2\Vert f\Vert_1\Pi^*(f)\subseteq\frac{K_{-\log(1-\lambda)}(g)}{\lambda}, \,\,\text{ for all }0<\lambda<1.$$

On the other hand, for any $0<\lambda<1$ and any $x\in\R^n,$
\begin{eqnarray*}
&&\int_0^\infty e^{-t}\frac{|K_t(f)|-|K_t(f)\cap(\lambda|x|\frac{x}{|x|}+K_t(f))|}{\lambda}dt\cr
&\geq&|x|\int_0^\infty e^{-t}|P_{x^\perp}\big(K_t(f)\cap(\lambda x+ K_t(f))\big)|dt
\end{eqnarray*}
and then, since  $\int_0^\infty e^{-t}|P_{x^\perp}\big(K_t(f)\cap(\lambda x+ K_t(f))\big)|dt$  decreases in $\lambda$ and
\begin{eqnarray*}
\sup_{\lambda\in(0,1)}|x|\int_0^\infty e^{-t}|P_{x^\perp}\big(K_t(f)\cap(\lambda x+ K_t(f))\big)|dt&=&|x|\int_0^\infty e^{-t}|P_{x^\perp}K_t(f)|dt\cr
&=&\frac{\Vert x\Vert_{\Pi^*(f)}}{2\Vert f\Vert_\infty},
\end{eqnarray*}
we have that if $\displaystyle\Vert x\Vert_{\Pi^*(f)}>2\Vert f\Vert_\infty\int_0^\infty e^{-t}|K_t(f)|dt=2\Vert f\Vert_1$, there exists $\lambda_1>0$ such that for every $0<\lambda\leq\lambda_1$
$$
\int_0^\infty e^{-t}\frac{|K_t(f)|-|K_t(f)\cap(\lambda|x|\frac{x}{|x|}+K_t(f))|}{\lambda}dt>\int_0^\infty e^{-t}|K_t(f)|dt
$$
and then $x\not\in \frac{K_{-\log(1-\lambda)}(g)}{\lambda}$ if $0<\lambda\leq\lambda_1$.
\end{proof}

In the following lemma we prove the aforementioned inclusion between the level sets of a function $g\in\mathcal{F}(\R^n)$ and the convex body $\widetilde{K}_n(g)$. We follow ideas from \cite{KM}. In that paper a similar result was stated with an interest on large values of $t$. Here we shall be interested on small values of $t$. In the second part of the Lemma we provide information on the equality case as it shall be used in the next Section.

\begin{lemma}\label{Inclusion}
Let $g\in\mathcal F(\mathbb R^n)$ be such that $0\in\textrm{int}(\textrm{supp}g)$, $g(0)=\Vert g\Vert_{\infty}>0$. Then for every $0\leq t\leq\frac{n}{e}$,
$$
\frac{t}{(n!)^\frac{1}{n}}\widetilde{K}_n(g)\subseteq K_{t}(g).
$$
Moreover, for any $0<t\leq\frac{n}{e}$ there is equality if and only if $g$ is log-linear on every 1-dimensional linear ray starting at 0 and, furthermore, if $g$ is not log-linear on the 1-dimensional linear ray starting at 0 spanned by $u\in S^{n-1}$, there exists $\varepsilon>0$ such that for every $0<t\leq \frac{n}{e}$
$$
\frac{t}{(n!)^\frac{1}{n}}(\rho_{\widetilde{K}_n(g)}(u)+\varepsilon)\leq\rho_{K_{t}(g)}(u).
$$
\end{lemma}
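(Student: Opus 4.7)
Fix $u\in S^{n-1}$ and set $R:=\rho_{K_t(g)}(u)$. The plan is to parameterize the one-dimensional data of $g$ along the ray through $u$ by $\psi(r):=-\log(g(ru)/g(0))$, a convex non-decreasing function on $[0,\infty)$ with $\psi(0)=0$; integrability of $g$ (together with $g(0)>0$) forces every level set $K_s(g)$ to be bounded, so $\psi(r)\to\infty$ as $r\to\infty$. Its (upper) inverse $\phi(s):=\sup\{r\geq 0:\psi(r)\leq s\}$ is then concave, non-decreasing, with $\phi(t)=R$. Applying the layer-cake identity $e^{-\psi(r)}=\int_{\psi(r)}^\infty e^{-s}ds$ and Fubini rewrites the radial function of $\widetilde K_n(g)$ as
$$\rho^n_{\widetilde K_n(g)}(u)=\frac{n}{g(0)}\int_0^\infty r^{n-1}g(ru)\,dr=\int_0^\infty\phi(s)^n e^{-s}\,ds,$$
reducing the inclusion to the estimate $\int_0^\infty\phi(s)^n e^{-s}\,ds\leq n!R^n/t^n$.

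The key bound will come from concavity: there exist $a,b\geq 0$ with $a+bt=R$ such that $\phi(s)\leq a+bs$ for every $s\geq 0$ (a supporting line at $s=t$). Expanding binomially and using $\int_0^\infty s^k e^{-s}\,ds=k!$,
$$\int_0^\infty\phi(s)^n e^{-s}\,ds\leq\sum_{k=0}^n\binom{n}{k}a^{n-k}b^k k!,$$
while the target $\frac{n!}{t^n}(a+bt)^n=\sum_k\binom{n}{k}a^{n-k}b^k\frac{n!}{t^{n-k}}$. It therefore suffices to verify the termwise inequality $k!\,t^{n-k}\leq n!$, equivalently $t\leq(n!/k!)^{1/(n-k)}$ for $k<n$. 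The most restrictive case, $k=0$, asks $t\leq(n!)^{1/n}$; the standard estimate $n!\geq(n/e)^n$ together with the monotonicity of $m\mapsto(m!)^{1/m}$ (which follows from AM--GM) then implies that the hypothesis $t\leq n/e$ is enough, finishing the inclusion.

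For the equality part, if $g$ is log-linear on every ray through $0$ then on each ray $\psi(r)=\alpha r$ with $\alpha=t/R$, hence $\phi(s)=Rs/t$, and all preceding inequalities become equalities. Conversely, if $\phi$ is not linear on $[0,\infty)$ (equivalently, $g$ fails to be log-linear on the ray spanned by $u$), then the set on which $\phi$ meets its supporting line at $t$ is a proper subinterval (concave pinched by linear), so $\phi$ is strictly below the line on a set of positive Lebesgue measure and the integral bound is strict. To upgrade to a uniform gap, I plan to apply the strict inequality at $t_0:=n/e$ to extract an $\eta>0$ with $\phi(t_0)/t_0\geq(n!)^{-1/n}\rho_{\widetilde K_n(g)}(u)+\eta$, and then invoke that concavity of $\phi$ with $\phi(0)\geq 0$ makes $s\mapsto\phi(s)/s$ non-increasing on $(0,\infty)$, transferring the gap to every $t\in(0,t_0]$ and yielding $\varepsilon:=(n!)^{1/n}\eta$. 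I expect the termwise comparison to be the main obstacle, since it is precisely there that the constant $(n!)^{1/n}$ and the range $t\leq n/e$ are forced; the rest is mainly careful bookkeeping of the chain of inequalities.
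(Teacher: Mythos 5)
Your route to the inclusion is genuinely different from the paper's, and it is correct. The paper works with the convex profile $\omega(r)=-\log g(ru)+\log g(0)$ directly: for each $q\in(0,n]$ it takes the supporting line of $\omega$ at the maximizer $r_0(q)$ of $r^qe^{-\omega(r)}$, bounds $\rho^n_{\widetilde K_n(g)}(u)$ by $g(r_0u)e^{q}n!\,r_0^n/q^n$, and then uses $g(r_0u)\geq e^{-q}$ together with log-concavity to place the point $g(r_0u)^{1/n}r_0u$ inside $K_{qe^{-q/n}}(g)$; the range $t\leq n/e$ appears as the range of $q\mapsto qe^{-q/n}$. You instead pass to the concave inverse $\phi$ of the profile, use the layer-cake identity $\rho^n_{\widetilde K_n(g)}(u)=\int_0^\infty\phi(s)^ne^{-s}\,ds$, take a single supporting line of $\phi$ at $s=t$, and compare termwise after a binomial expansion; the needed inequality $k!\,t^{n-k}\leq n!$ reduces, as you say, to $t\leq(n!)^{1/n}$, so you even get the inclusion on the slightly larger range $t\leq(n!)^{1/n}$ (consistent with the paper's remark that $t=n$ fails). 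Your treatment of the \emph{furthermore} clause is also simpler than the paper's: the paper must produce a gap uniformly in $q$ and does so through a compactness argument (continuity of $q\mapsto r_0(q)$ plus Fatou's lemma), whereas you only need strictness at the single value $t_0=n/e$ and then transfer it to all $t\in(0,t_0]$ using that $s\mapsto\phi(s)/s$ is non-increasing.

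There is, however, a gap in the converse of the equality characterization. You claim that if $g$ is not log-linear on the ray spanned by $u$ then $\phi$ meets its supporting line at $t$ only on a proper subinterval, so that $\int_0^\infty\phi(s)^ne^{-s}\,ds<\int_0^\infty(a+bs)^ne^{-s}\,ds$. This fails exactly when $\phi(s)=a+bs$ is affine with $a=\phi(0)>0$, i.e.\ when $g(ru)=g(0)$ for $r\in[0,a]$ and decays log-linearly afterwards (the case $b=0$, where $g$ restricted to the ray is proportional to the characteristic function of a segment, is of the same kind and is precisely the case relevant to recovering Zhang's inequality for convex bodies). Such a $g$ is \emph{not} log-linear on the ray in the sense of the lemma (the condition $g(\lambda x)=g(0)^{1-\lambda}g(x)^\lambda$ forces $\phi(0)=0$), yet $\phi$ coincides with its supporting line everywhere, so your first inequality is an equality and your stated source of strictness disappears. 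The strictness in this case must come from the second, termwise inequality: for $k<n$ and $t\leq n/e<(n!)^{1/n}$ one has $k!\,t^{n-k}<n!$ strictly, and the coefficient $\binom{n}{k}a^{n-k}b^k$ with $k=0$ equals $a^n>0$, so $\sum_{k}\binom{n}{k}a^{n-k}b^kk!<\frac{n!}{t^n}(a+bt)^n$ strictly. Once you add this observation, equality forces $a=0$ and $\phi$ affine, hence linear through the origin, and the quantitative gap at $t_0$ also follows; so the hole is fillable within your own framework, but as written the argument misses precisely the plateau-at-the-origin profiles.
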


\begin{proof}
We can assume without loss of generality that $g(0)=1$. Otherwise consider $\frac{g}{\Vert g\Vert_\infty}$.  Write $g(x)=e^{-v(x)}$ for some convex function $v$ and fix $u\in S^{n-1}$.

 For any $q>0$ the function
$$
\phi(r)=v(ru)-q\log r
$$
is strictly convex in $(0,\infty)$ and $\omega(r):=v(ru)$ is non-decreasing and convex on $[0,\infty)$. Since $r^{q}e^{-v(ru)}$ is integrable on $[0,\infty)$ and takes value 0 at 0,  we have $\lim_{r\to\infty}\phi(r)=\lim_{r\to0^+}\phi(r)=\infty$ and consequently $\phi$ attains a unique minimum at some number $r_0=r_0(q)$ and the lateral derivatives of $\phi$ verify
$\phi'_{-}(r_0)\leq 0$ and  $\phi'_{+}(r_0)\geq 0$.
This implies that the lateral derivatives of $\omega(r)$ at $r_0$ verify
$\omega'_{-}(r_0)\leq \frac{q}{r_0}$,
and $\omega'_{+}(r_0)\geq \frac{q}{r_0}$. Notice that if $\omega$ is linear then necessarily $\omega(r)=\omega(r_0)+\frac{q}{r_0}(r-r_0)$.

 Notice also that $r\to \omega(r_0)+\frac{q}{r_0}(r-r_0)$ is a supporting line of $\omega$ at $r_0$ and by convexity, $\omega(r)\geq \omega(r_0)+\frac{q}{r_0}(r-r_0)$ for every $r\in [0,\infty)$. Thus,
\begin{align*}\label{InequalityOfIntegrals}
n\int_0^\infty r^{n-1}g(ru)dr&=n\int_0^\infty r^{n-1}e^{-\omega(r)}dr\leq ne^{q-\omega(r_0)}\int_0^\infty r^{n-1}e^{-\frac{qr}{r_0}}dr\cr
&=ne^{q-\omega(r_0)}\left(\frac{r_0}{q}\right)^n\int_0^\infty r^{n-1}e^{-r}dr=g(r_0u)\frac{e^{q}n!}{q^n}r_0^n.
\end{align*}
Moreover, the previous inequality is equality if and only if $\omega(r)= \omega(r_0)+\frac{q}{r_0}(r-r_0)$ for every $r\in[0,\infty)$.

Consequently, for any $q>0$ and since $
\rho^n_{\widetilde{K}_n(g)}(u)=n\int_0^\infty r^{n-1}g(ru)dr,
$
$$
\frac{q}{e^{\frac{q}{n}}(n!)^\frac{1}{n}}\rho_{\widetilde{K}_n(g)}(u)\leq g(r_0u)^\frac{1}{n}r_0,
$$
with equality if and only if $\omega(r)= \omega(r_0)+\frac{q}{r_0}(r-r_0)$ for every $r\in[0,\infty)$.
On the other hand, since $\omega$ is convex, $\omega'_{+}(r)\leq\frac{q}{r_0}$  if $r<r_0$  and then
$$
\omega(r_0)=\omega(0)+\int_0^{r_0}\omega'_{+}(r)dr\leq v(0)+q=q,
$$
 thus $g(r_0u)\geq e^{-q}$, with equality if and only if $\omega'_{+}(r)=\frac{q}{r_0}$ for every $r\in[0,r_0)$ and therefore $\omega(r)= \omega(r_0)+\frac{q}{r_0}(r-r_0)$ for every $r\in[0,r_0]$. Now, the definition of the log-concavity of $g$ applied to 0 and $r_0u$ yields
$$
g\left( g(r_0u)^\frac{1}{n}r_0u\right)\geq g(r_0u)^{g(r_0u)^\frac{1}{n}}\cdot 1=e^{g(r_0u)^\frac{1}{n}\log g(r_0u)}
$$
with equality if and only if $\omega(r)=v(ru)$ is linear in $[0,r_0]$ and then  $\omega(r)= \omega(r_0)+\frac{q}{r_0}(r-r_0)$ for every $r\in[0,r_0]$.
Since the function $x\to x^\frac{1}{n}\log x$ attains its minimum at $x=e^{-n}$ and is increasing in the interval $(e^{-n},\infty)$, if $0<q\leq n$
$$
g\left( g(r_0u)^\frac{1}{n}r_0u\right)\geq e^{-qe^{-\frac{q}{n}}},
$$
with equality if and only if $\omega(r)$ is linear in $[0,r_0]$ and $g(r_0u)=e^{-q}$, which occurs if and only if $\omega(r)= \omega(r_0)+\frac{q}{r_0}(r-r_0)$ for every $r\in[0,r_0]$, that is, $g(r_0u)^\frac{1}{n}r_0u\in K_{qe^{-\frac{q}{n}}}(g)$ and $g(r_0u)^\frac{1}{n}r_0u\in \partial K_{qe^{-\frac{q}{n}}}(g)$ if and only if  $\omega(r)= \omega(r_0)+\frac{q}{r_0}(r-r_0)$ for every $r\in[0,r_0]$.

Since this is valid for any $u\in S^{n-1}$,
$$
\frac{qe^{-\frac{q}{n}}}{(n!)^\frac{1}{n}}\widetilde{K}_n(g)\subseteq K_{qe^{-\frac{q}{n}}}(g),
$$
with equality if and only if for every $u\in S^{n-1}$,  $\omega(r)= \omega(r_0)+\frac{q}{r_0}(r-r_0)$ for every $r\in[0,\infty)$, i.e. $v(ru)$ is linear for every $u\in  S^{n-1}$.
 Finally, observe that when $x\in(0,n]$, the function $x\to xe^{-\frac{x}{n}}$ takes every value in $(0,\frac{n}{e}]$ thus, for every $0\leq t\leq \frac{n}{e}$
$$
\frac{t}{(n!)^\frac{1}{n}}\widetilde{K}_n(g)\subseteq K_{t}(g)
$$
and for any $t\in(0,\frac{n}{e}]$ there is equality if and only if $v(ru)$ is linear for every $u\in  S^{n-1}$, i.e., for every $x\in\R^n$ and every $\lambda\in[0,1]$, $g(\lambda x)=g(0)^{1-\lambda} g(x)^\lambda$.

In order to establish the \textit{furthermore} part, we are given some $u\in S^{n-1}$. We first need to prove the following

\textbf{Claim:} \textsl{The function $q\to r_0(q)$ is continuous in $(0,\infty)$ and is bounded around 0.}

Indeed, we first consider a sequence $(q_k)_{k=1}^\infty$  converging to $q\in(0,\infty)$ so that there is a subsequence $(r_0(q_{k_i}))_{i=1}^\infty$  converging to some $\overline{r}$. We have
$$
\omega(r_0(q_{k_i}))-q_{k_i}\log (r_0(q_{k_i}))\leq\omega(r_0(q))-q_{k_i}\log (r_0(q))
$$
and taking limits,
$
\omega(\overline{r})-q\log (\overline{r})\leq\omega(r_0(q))-q\log (r_0(q)).$ Therefore, $\overline{r}=r_0(q)$.

If, on the other hand, $q>0$ and the subsequence $(r_0(q_{k_i}))_{i=1}^\infty$ tends to $\infty$ then, since $q_k\leq M$ for every $k\in\N$ and some $M>0$, we have that
\begin{align*}
\omega(r_0(q_{k_i}))&-M\log (r_0(q_{k_i}))+(M-q_{k_i})\log (r_0(q_{k_i}))=\omega(r_0(q_{k_i}))-q_{k_i}\log (r_0(q_{k_i}))\cr
&\leq\omega(r_0(q))-q_{k_i}\log (r_0(q)),
\end{align*}
leading to a contradiction, since the left hand side of the inequality tends to $\infty$. Thus, both the inferior limit and the superior limit of $r_0(q_k)$ are equal to $r_0(q)$ and we have proven continuity in $(0,\infty)$. Finally, if $(q_k)_{k=1}^\infty$ is a sequence converging to 0 and some subsequence $(r_0(q_{k_i}))_{i=1}^\infty$ tends to $\infty$,  we would have that for every $r\in[0,\infty)$
$$
\omega(r_0(q_{k_i}))+\frac{q_{k_i}}{r_0(q_{k_i})}(r-r_0(q_{k_i}))\leq\omega(r),
$$
leading to a contradiction since the left hand side of this inequality tends to $\infty$. This finishes the proof of the Claim.

As a consequence, if $(q_k)_{k=1}^\infty$ converges to $q\in[0,\infty)$, we have that the sequence $(\frac{q_{k}}{r_0(q_{k})})_{k=1}^\infty$ is bounded, since
$
\frac{q_{k}}{r_0(q_{k})}\leq\omega_+^\prime(r_0(q_{k})).
$

Now, assume that there is no $\overline{\varepsilon}>0$ verifying that for every $0<q\leq n$
$$
n\int_0^\infty r^{n-1}e^{-(\omega(r_0)+\frac{q}{r_0}(r-r_0))}dr-n\int_0^\infty r^{n-1}e^{-\omega(r)}dr\geq\overline{\varepsilon}.
$$
Then we can find a sequence $(q_k)_{k=1}^\infty$ (and if necessary extract from it further subsequences which we denote in the same way) so that
$$
\lim_{k\to\infty}n\int_0^\infty r^{n-1}\left(e^{-(\omega(r_0(q_k))+\frac{q_k}{r_0(q_k)}(r-r_0(q_k)))}-e^{-\omega(r)}\right)dr=0,
$$
$q_k$ converges to some $q\in[0,n]$, $r_0(q_k)$ converges to some $\overline{r}\in[0,\infty)$ and $\frac{q_k}{r_0(q_k)}$ converges to some $\alpha\in[0,\infty)$. Therefore, since for every $r\in[0,\infty)$
$$
\omega(r_0(q_{k}))+\frac{q_{k}}{r_0(q_{k})}(r-r_0(q_{k}))\leq\omega(r),
$$
we have that for every $r\in[0,\infty)$, $
\omega(\overline{r})+\alpha(r-\overline{r})\leq\omega(r)
$
and, since by Fatou's lemma
\begin{eqnarray*}
	0&\leq& n\int_0^\infty r^{n-1}\left(e^{-(\omega(\overline{r})+\alpha(r-\overline{r}))}-e^{-\omega(r)}\right)dr\cr
	&\leq&\lim_{k\to\infty}n\int_0^\infty r^{n-1}\left(e^{-(\omega(r_0(q_k))+\frac{q_k}{r_0(q_k)}(r-r_0(q_k)))}-e^{-\omega(r)}\right)dr=0,
\end{eqnarray*}
we have that for every $r\in[0,\infty),
\omega(r)=\omega(\overline{r})+\alpha(r-\overline{r})
$
and so $\omega$ is linear.

Therefore if $\omega$ is not linear, there exists $\overline{\varepsilon}>0$ such that for every $0<q\leq n$,
$$
n\int_0^\infty r^{n-1}g(ru)dr+\overline{\varepsilon}\leq g(r_0u)\frac{e^{q}n!}{q^n}r_0^n
$$
 and so, for some $\varepsilon>0$ and every $0<q\leq n$
 $$
 \frac{q}{e^{\frac{q}{n}}(n!)^\frac{1}{n}}(\rho_{\widetilde{K}_n(g)}(u)+\varepsilon)\leq \frac{q}{e^{\frac{q}{n}}(n!)^\frac{1}{n}}(\rho^n_{\widetilde{K}_n(g)}(u)+\overline{\varepsilon})^\frac{1}{n}\leq g(r_0u)^\frac{1}{n}r_0.
 $$

Now, we continue as in the proof of the first part of the Lemma. If for some $u\in S^{n-1}$ we assume that $\omega$ is not linear, then there exists $\varepsilon>0$ such that for every $0<q\leq n$
$$
\frac{qe^{-\frac{q}{n}}}{(n!)^\frac{1}{n}}(\rho_{\widetilde{K}_n(g)}(u)+\varepsilon)\leq\rho_{K_{qe^{-\frac{q}{n}}}(g)}(u).
$$
and consequently,  for every $0<t\leq \frac{n}{e}$
$$
\frac{t}{(n!)^\frac{1}{n}}(\rho_{\widetilde{K}_n(g)}(u)+\varepsilon)\leq\rho_{K_{t}(g)}(u)
$$

\end{proof}

\begin{rmk}
The inclusion in the lemma above cannot be extended in general to the whole range of $t\in[0,n]$. If $f=\chi_K$ is the characteristic function of a convex body, $\widetilde{K}_n(f)=K$ and we would have, taking $t=n$,
$
\frac{n}{(n!)^\frac{1}{n}}\widetilde{K}_n(f)\subseteq K_{n}(f)
$
then, by using Stirling's formula, it would imply for large values of $n$ that
$
\frac{e}{2}K\subseteq K
$
which is trivially false.

On the other hand, using the same ideas as above we can obtain a more general result, namely, for every $p>0$ and $0\leq t\leq\frac{p}{e}$,
	$
	\frac{t}{\Gamma(1+p)^\frac{1}{p}}\widetilde{K}_p(g)\subseteq K_{t}(g).
	$

\end{rmk}

Now we can prove the main result of the paper. In this section we prove the inequality.

\begin{proof}[Proof of the inequality of Theorem \ref{thm:FunctionalZhang}]
Let us consider the function $g:\R^n\to\R$ defined by
\[
g(x)=\int_0^\infty e^{-t}|K_t(f)\cap(x+ K_t(f))|dt.
\]
By Lemma \ref{lem:K_g} and Lemma \ref{Inclusion} (since $g(0)=\Vert g\Vert_{\infty}>0$ by Lemma \ref{lem:g(x)}), for any $0<\lambda_0<1-e^{-\frac{n}{e}}$ we have
\begin{equation*}
2\Vert f\Vert_1\Pi^*\left(f\right)=\!\bigcap_{0<\lambda<\lambda_0}\frac{K_{-\log(1-\lambda)}(g)}{\lambda}
\supseteq\bigcap_{0<\lambda<\lambda_0}\frac{-\log(1-\lambda)}{(n!)^{\frac1n}\lambda}\widetilde{K}_n(g).
\end{equation*}
Since $h(\lambda):=-(\log(1-\lambda))/\lambda$ is increasing in $\lambda\in(0,1)$, and $\lim_{\lambda\rightarrow 0+}h(\lambda)=1$,
then
\[
2\Vert f\Vert_1\Pi^*\left(f\right)\supseteq \frac{1}{(n!)^{\frac1n}}\widetilde{K}_n(g).
\]
Taking Lebesgue measure we obtain that
$$
2^n\Vert f\Vert_1^n\left|\Pi^*\left(f\right)\right|\geq\frac{1}{n!}|\widetilde{K}_n(g)|.$$

One can conclude the result as a direct consequence of Lemmas \ref{lem:g(x)} and \ref{VolumeAndIntegral}.
\end{proof}

\section{Characterization of the equality in Theorem \ref{thm:FunctionalZhang}}\label{sec:EqFunctionalZhang}

In this section we characterize the equality case in Theorem \ref{thm:FunctionalZhang}. First we will show that if there is equality in the theorem for a function $f$ attaining its maximum at the origin, then the associated covariogram function $g$ has to be log-linear in every 1-dimensional ray starting at 0. Second we will prove that such condition implies that $\frac{f}{\Vert f\Vert_\infty}=e^{-\Vert\cdot\Vert_\Delta}$ for some simplex $\Delta$ containing the origin.

\begin{lemma}\label{lem:g(tx)=g(x)^t}
Let $f\in\mathcal F(\R^n)$ be such that $\Vert f\Vert_\infty=f(0)$ and let $g:\R^n\to\R$ be
\[
g(x)=\int_0^\infty e^{-t}|K_t(f)\cap(x+ K_t(f))|dt.
\]
If $f$ attains equality in Theorem \ref{thm:FunctionalZhang},
then for every $x\in\R^n$ and every $\lambda\in[0,1]$, $g(\lambda x)=g(0)^{1-\lambda} g(x)^\lambda$.
\end{lemma}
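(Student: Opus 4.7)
The plan is to read equality in Theorem \ref{thm:FunctionalZhang} backwards through the proof of the inequality and force sharpness in the inclusion coming from Lemma \ref{Inclusion}, then invoke the quantitative ``furthermore'' clause of that lemma to rule out failure of log-linearity on any ray.

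First, I would observe that the proof of the inequality established both a set inclusion $2\|f\|_1\Pi^*(f)\supseteq\frac{1}{(n!)^{1/n}}\widetilde{K}_n(g)$ and the volume inequality $2^n\|f\|_1^n|\Pi^*(f)|\ge\frac{1}{n!}|\widetilde{K}_n(g)|$, and that the reduction from this volume inequality to the statement of the theorem uses only the identities in Lemmas \ref{lem:g(x)} and \ref{VolumeAndIntegral}, which are equalities. Hence equality in Theorem \ref{thm:FunctionalZhang} is equivalent to equality in the volume inequality. Since both bodies are convex bodies with $0$ in their interior (the latter by Lemma \ref{lem:g(x)} together with the definition of $\widetilde{K}_n(g)$), inclusion plus equal Lebesgue measure forces set equality:
\[
2\|f\|_1\Pi^*(f)\;=\;\frac{1}{(n!)^{1/n}}\widetilde{K}_n(g),
\]
and in particular the radial functions agree at every $u\in S^{n-1}$.

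Next, I would argue by contradiction: suppose $g$ is not log-linear on the ray $\{ru_0:r\ge 0\}$ for some $u_0\in S^{n-1}$. By the ``furthermore'' part of Lemma \ref{Inclusion} (applicable since $g(0)=\|g\|_\infty>0$ by Lemma \ref{lem:g(x)}), there is $\varepsilon>0$ with
\[
\frac{t}{(n!)^{1/n}}\bigl(\rho_{\widetilde{K}_n(g)}(u_0)+\varepsilon\bigr)\le\rho_{K_t(g)}(u_0),\qquad 0<t\le n/e.
\]
Setting $t=-\log(1-\lambda)$ for $0<\lambda<1-e^{-n/e}$ and dividing by $\lambda$, this yields
\[
\frac{h(\lambda)}{(n!)^{1/n}}\bigl(\rho_{\widetilde{K}_n(g)}(u_0)+\varepsilon\bigr)\le\rho_{K_{-\log(1-\lambda)}(g)/\lambda}(u_0),
\]
where $h(\lambda)=-\log(1-\lambda)/\lambda$ is increasing in $\lambda$ with $\lim_{\lambda\to 0^+}h(\lambda)=1$. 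Using Lemma \ref{lem:K_g} and the elementary formula $\rho_{\bigcap_i A_i}(u)=\inf_i\rho_{A_i}(u)$ for star-shaped bodies, I take the infimum over $\lambda\in(0,\lambda_0)$ on both sides to conclude
\[
\rho_{2\|f\|_1\Pi^*(f)}(u_0)\;\ge\;\frac{1}{(n!)^{1/n}}\bigl(\rho_{\widetilde{K}_n(g)}(u_0)+\varepsilon\bigr)\;>\;\frac{1}{(n!)^{1/n}}\rho_{\widetilde{K}_n(g)}(u_0),
\]
contradicting the equality of radial functions established in the first step. Therefore $g$ is log-linear on every ray from $0$, which is exactly the identity $g(\lambda x)=g(0)^{1-\lambda}g(x)^\lambda$ for all $x\in\R^n$ and $\lambda\in[0,1]$, as already noted in the last sentence of the proof of Lemma \ref{Inclusion}.

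The main obstacle is conceptual rather than technical: one must recognise that the proof of Theorem \ref{thm:FunctionalZhang} actually delivers the set inclusion $2\|f\|_1\Pi^*(f)\supseteq\frac{1}{(n!)^{1/n}}\widetilde{K}_n(g)$ and not merely the corresponding volume inequality, and that the ``furthermore'' clause of Lemma \ref{Inclusion} was formulated precisely so that the small gap $\varepsilon$ survives the passage $\lambda\to 0^+$ through the factor $h(\lambda)\to 1$. Once these two facts are in hand the argument is immediate.
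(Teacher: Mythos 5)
Your proposal is correct and follows essentially the same route as the paper: the paper's proof of this lemma also argues by contradiction, invoking the \emph{furthermore} clause of Lemma \ref{Inclusion} to obtain the $\varepsilon$-gap in the radial function along some direction $u$, passing it through $h(\lambda)=-\log(1-\lambda)/\lambda\to 1$ and the identity $\rho_{\cap_\lambda A_\lambda}=\inf_\lambda\rho_{A_\lambda}$ via Lemma \ref{lem:K_g}, and concluding that $2\Vert f\Vert_1\Pi^*(f)$ would then strictly contain $\frac{1}{(n!)^{1/n}}\widetilde{K}_n(g)$ with strictly larger volume, contradicting equality in the theorem. Your extra intermediate step (upgrading equal volumes plus inclusion to set equality before comparing radial functions) is a cosmetic rearrangement of the same argument.
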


\begin{proof}
Assume that the statement is not true. Then, by Lemma \ref{Inclusion}, there exists $u\in S^{n-1}$ and $\varepsilon>0$ such that for any $0<\lambda_0<1-e^{-\frac{n}{e}}$ and any $0<\lambda<\lambda_0$
$$
\frac{-\log(1-\lambda)}{\lambda(n!)^\frac{1}{n}}(\rho_{\widetilde{K}_n(g)}(u)+\varepsilon)\leq\rho_{\frac{K_{-\log(1-\lambda)}(g)}{\lambda}}(u).
$$
Consequently, for such $u$
\begin{eqnarray*}
\rho_{\bigcap_{0<\lambda<\lambda_0}\frac{K_{-\log(1-\lambda)}(g)}{\lambda}}(u)&=&\inf_{0<\lambda<\lambda_0}\rho_{\frac{K_{-\log(1-\lambda)}(g)}{\lambda}}(u)\cr
&\geq&\inf_{0<\lambda<\lambda_0}\frac{-\log(1-\lambda)}{\lambda(n!)^\frac{1}{n}}(\rho_{\widetilde{K}_n(g)}(u)+\varepsilon)\cr
&=&\frac{1}{(n!)^\frac{1}{n}}(\rho_{\widetilde{K}_n(g)}(u)+\varepsilon),
\end{eqnarray*}
and then
$$
2\Vert f\Vert_1\Pi^*\left(f\right)=\bigcap_{0<\lambda<\lambda_0}\frac{K_{-\log(1-\lambda)}(g)}{\lambda}
\supsetneq\frac{1}{(n!)^{\frac1n}}\widetilde{K}_n(g)
$$
and the volume of the left-hand side convex body is strictly greater than the volume of the right hand side convex body.
\end{proof}

The next lemma shows that if $g$ is log-linear in every 1-dimensional linear ray starting from 0 then $\frac{f}{\Vert f\Vert_\infty}=e^{-\Vert\cdot\Vert_\Delta}$ for some simplex $\Delta$ containing the origin.
\begin{lemma}\label{lem:equality}
	Let $f\in\mathcal F(\mathbb R^n)$ be such that $\Vert f\Vert_\infty=f(0)$ and let $g:\R^n\to\R$ be
	$$
	g(x)=\int_0^\infty e^{-t}|K_t(f)\cap(x+ K_t(f))|dt.
	$$
	Then, for every $x\in\R^n$ and every $\lambda\in[0,1]$, $g(\lambda x)=g(0)^{1-\lambda} g(x)^\lambda$ if and only if $\frac{f(x)}{\Vert f\Vert_\infty}=e^{-\Vert x\Vert_{\Delta}}$ with $\Delta$ a simplex containing the origin.
\end{lemma}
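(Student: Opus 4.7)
We treat the two implications separately.

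\emph{($\Leftarrow$).} If $f(x)/\Vert f\Vert_\infty=e^{-\Vert x\Vert_\Delta}$ with $\Delta$ a simplex containing the origin, then $K_t(f)=t\Delta$, so
\[
g(x)=\int_0^\infty e^{-t}|t\Delta\cap(x+t\Delta)|\,dt=\int_0^\infty e^{-t}t^n c_\Delta(x/t)\,dt,
\]
where $c_\Delta(y):=|\Delta\cap(y+\Delta)|$. Since $\Delta$ is a simplex, its covariogram admits the explicit form
\[
c_\Delta(y)=|\Delta|\bigl(1-\Vert y\Vert_{\Delta-\Delta}\bigr)_+^n
\]
(direct verification on the standard simplex combined with affine invariance of $c_\Delta$, $\Delta-\Delta$ and Lebesgue measure; this is the classical simplex characterization via the covariogram, cf.~\cite{SCH}). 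Substituting and setting $H:=\Vert x\Vert_{\Delta-\Delta}$, the change of variables $s=t-H$ gives
\[
g(x)=|\Delta|\int_H^\infty e^{-t}(t-H)^n\,dt=|\Delta|\,n!\,e^{-H},
\]
which is log-linear on every ray starting at $0$ since $\Vert\cdot\Vert_{\Delta-\Delta}$ is positively $1$-homogeneous.

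\emph{($\Rightarrow$).} Suppose $g(\lambda x)=g(0)^{1-\lambda}g(x)^\lambda$ for every $x\in\R^n$ and $\lambda\in[0,1]$; equivalently, $g(x)=g(0)e^{-h(x)}$ where $h$ is even, convex and positively $1$-homogeneous.

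\emph{Step A} ($v$ is 1-homogeneous). The proof of Lemma \ref{lem:g(x)} establishes log-concavity of $g$ by chaining, at each admissible $(t_1,t_2)$, the Brunn--Minkowski inequality on the intersections $K_{t_i}(f)\cap(x_i+K_{t_i}(f))$, the AM--GM step $(1-\lambda)a+\lambda b\ge a^{1-\lambda}b^\lambda$ raised to the $n$-th power, and Pr\'ekopa--Leindler integrated against $e^{-t}$. The equality assumption applied to $(x_1,x_2)=(0,x)$ forces each link of this chain to be sharp: Brunn--Minkowski equality forces $K_{t_1}(f)$ and $K_{t_2}(f)\cap(x+K_{t_2}(f))$ to be homothetic, AM--GM equality forces equality of their volumes (hence they are translates), and Pr\'ekopa--Leindler equality forces $F(t,x):=e^{-t}|K_t(f)\cap(x+K_t(f))|$ to admit a log-affine shift representation $F(t,x)=F(t-b(x),0)$ for a positively $1$-homogeneous function $b:\R^n\to[0,\infty)$. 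These rigidities identify $K_t(f)\cap(x+K_t(f))$ as a translate of $K_{t-b(x)}(f)$ whenever $t\ge b(x)$; matching volumes $|K_t(f)\cap(x+K_t(f))|=|K_{t-b(x)}(f)|$ across $t$ then forces $K_t(f)=tL$ for $L:=K_1(f)$, with $b=\Vert\cdot\Vert_{L-L}$.

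\emph{Step B} ($L$ is a simplex). Given $K_t(f)=tL$, the integral defining $g$ along the ray $x=\mu u$ becomes, after the substitution $t=\mu\tau$,
\[
\int_0^\infty e^{-\mu\tau}\tau^n c_L(u/\tau)\,d\tau=\frac{|L|\,n!}{\mu^{n+1}}\,e^{-\mu h(u)}.
\]
The Laplace inversion $\mathcal{L}^{-1}[\mu^{-(n+1)}e^{-\mu h(u)}](\tau)=(\tau-h(u))_+^n/n!$ then yields
\[
c_L(y)=|L|\bigl(1-h(y)\bigr)_+^n\qquad(y\in\R^n).
\]
Consequently $c_L^{1/n}$ is affine on every ray from the origin, and the classical simplex characterization invoked in the $(\Leftarrow)$ direction forces $L=:\Delta$ to be a simplex. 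We conclude $f/\Vert f\Vert_\infty=e^{-\Vert\cdot\Vert_\Delta}$.

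The principal obstacle is Step A: extracting from the equality assumption on $g$ enough rigidity out of the simultaneous equality cases of Brunn--Minkowski, AM--GM and Pr\'ekopa--Leindler to force the level sets of $f$ to be linear dilations of one another. Once Step A is in hand, Step B is essentially a Laplace inversion coupled with the classical simplex characterization already used in $(\Leftarrow)$.
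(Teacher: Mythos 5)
Your ($\Leftarrow$) direction is correct and in fact more explicit than what the paper records: computing $g(x)=|\Delta|\,n!\,e^{-\Vert x\Vert_{\Delta-\Delta}}$ from the simplex covariogram formula $c_\Delta(y)=|\Delta|\bigl(1-\Vert y\Vert_{\Delta-\Delta}\bigr)_+^n$ is a clean way to see log-linearity on rays. Your Step B is also a legitimate alternative ending: the paper instead integrates $g$ over $\R^n$ in two ways and lands on equality in the Rogers--Shephard inequality \cite{RS}, whereas you invert a Laplace transform to recover $c_L(y)=|L|(1-h(y))_+^n$ pointwise and then invoke the covariogram characterization of the simplex; these two characterizations are equivalent and both routes close the argument once $K_t(f)=tL$ is known.

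The genuine gap is in Step A, at the sentence ``matching volumes $|K_t(f)\cap(x+K_t(f))|=|K_{t-b(x)}(f)|$ across $t$ then forces $K_t(f)=tL$.'' This is precisely the technical heart of the paper's proof, and it does not follow from volume matching plus homogeneity of the shift alone. Granting you the translate/shift structure (your $b$ then coincides with $h$, so its $1$-homogeneity is indeed free), what still has to be proved is: (i) the identification $b(x)=\inf\{t: x\in K_t(f)-K_t(f)\}$, so that homogeneity of $b$ becomes the statement $\inf\{t:\lambda x\in K_t-K_t\}=\lambda\inf\{t:x\in K_t-K_t\}$; (ii) the upgrade of this scalar identity to the set identity $K_{\lambda t}-K_{\lambda t}=\lambda(K_t-K_t)$, which the paper obtains by a boundary argument (showing that $x\in\partial(K_{t_0}-K_{t_0})$ forces $\inf\{t:x\in K_t-K_t\}=t_0$, ruled out otherwise by considering dilations $\lambda>1$); and (iii) the deduction $K_{\lambda t}=\lambda K_t$ from $\lambda K_t\subseteq K_{\lambda t}$ (valid since $0\in K_t$) together with equality of the difference bodies, via support functions ($A\subseteq B$ and $A-A=B-B$ imply $h_A(u)\le h_B(u)$, $h_A(-u)\le h_B(-u)$ and $h_A(u)+h_A(-u)=h_B(u)+h_B(-u)$, hence $A=B$). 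None of (i)--(iii) appears in your proposal, and without them the dilation structure is unproved. A secondary point: you lean on the equality case of Pr\'ekopa--Leindler to produce the shift representation; that is usable (Dubuc's theorem, with care about normalizing constants and null sets), but the paper deliberately avoids it, extracting the same rigidity from a layer-cake decomposition of $h_x(t)=e^{-t}|K_t\cap(x+K_t)|$ that needs only the equality case of Brunn--Minkowski.
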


\begin{proof}
	The condition $g(\lambda x)=g(0)^{1-\lambda} g(x)^\lambda$ for every $x\in\R^n$ and every $\lambda\in[0,1]$ implies that $g(x)\neq 0$ for every $x\in\R^n$, since $g$ is continuous at 0, as 0 is in the interior of the support of $g$. In the following, in order to ease the notation, we will denote $K_t=K_t(f)$ for every $t\in[0,\infty)$. For $x\in\R^n$ and $t\in[0,\infty)$, we define
	$$
	h_x(t)= e^{-t}|K_t\cap(x+K_t)|.
	$$
	Notice that for any $x\in\R^n$, any $t_1,t_2\in[0,\infty)$ and any $\lambda\in[0,1]$
	$$
	K_{(1-\lambda)t_1+\lambda t_2}\cap(x+K_{(1-\lambda)t_1+\lambda t_2})\supseteq(1-\lambda) (K_{t_1}\cap(x+K_{t_1}))+\lambda(K_{t_2}\cap(x+K_{t_2})).
	$$
	Therefore, by Brunn-Minkowski inequality
\begin{equation}\label{eq:Brunn-Minkowski}
	|K_{(1-\lambda)t_1+\lambda t_2}\cap(x+K_{(1-\lambda)t_1+\lambda t_2})|\geq|K_{t_1}\cap(x+K_{t_1})|^{1-\lambda}|K_{t_2}\cap(x+K_{t_2})|^\lambda
\end{equation}
	and
	\begin{eqnarray*}
		h_x((1-\lambda)t_1+\lambda t_2)&\geq &\left(e^{-t_1}|K_{t_1}\cap(x+K_{t_1})|\right)^{1-\lambda}\left(e^{-t_2}|K_{t_2}\cap(x+K_{t_2})|\right)^\lambda\cr
		&=&h_x(t_1)^{1-\lambda}h_x(t_2)^\lambda.
	\end{eqnarray*}
	Consequently, since $h_x$ is log-concave and integrable in $[0,\infty)$, for any $s\in[0,\infty)$, the set $\{t\in[0,\infty)\,:\,h_x(t)\geq s\}$ is either empty or a closed interval. Let us remark that for each $x\in\R^n$, since $g(x)\neq0$, the function $h_x(t)$ is not identically 0 and it attains its maximum at a unique point, since if $\Vert h_x\Vert_\infty=h_x(t_1)=h_x(t_2)$, with $t_1<t_2$, then we have that for every $\lambda\in[0,1]$ $h_x((1-\lambda)t_1+\lambda t_2)\geq\Vert h_x\Vert_\infty$, so $h_x((1-\lambda)t_1+\lambda t_2)=\Vert h_x\Vert_\infty$. Therefore, we have equality in \eqref{eq:Brunn-Minkowski} and by the characterization of the equality cases in Brunn-Minkowski inequality (see, for instance \cite[Section 1.2]{AGM}) $K_{t_2}\cap(x+K_{t_2})$ is a translation of $K_{t_1}\cap(x+K_{t_1})$ and they have the same volume. Thus $h_x(t_2)<h_x(t_1)$, which contradicts the fact that the maximum is attained both at $t_1$ and $t_2$. Notice also that for any $s\in[0,\infty)$ and any $\lambda\in[0,1]$, if $t_1\in \{t\in[0,\infty)\,:\,h_0(t)\geq s\Vert h_0\Vert_\infty\}$ and $t_2\in\{t\in[0,\infty)\,:\,h_x(t)\geq s\Vert h_x\Vert_\infty\}$, then calling $t=(1-\lambda)t_1+\lambda t_2$, since
	$$
	K_t\cap(\lambda x+ K_t)\supseteq(1-\lambda)K_{t_1}+\lambda (K_{t_2}\cap(x+K_{t_2})),
	$$
	by Brunn-Minkowski inequality
	$$
	|K_t\cap(\lambda x+ K_t)|\geq|K_{t_1}|^{1-\lambda}|K_{t_2}\cap(x+K_{t_2})|^\lambda
	$$
	and
	\begin{eqnarray*}
		h_{\lambda x}(t)&=&e^{-t}|K_t\cap(\lambda x+ K_t)|\geq(e^{-t_1}|K_{t_1}|)^{1-\lambda}(e^{-t_1}|K_{t_2}\cap(x+K_{t_2})|)^\lambda\cr
		&=&h_0(t_1)^{1-\lambda} h_x(t_2)^\lambda\geq s\Vert h_0\Vert_\infty^{1-\lambda}\Vert h_x\Vert_\infty^\lambda.
	\end{eqnarray*}
	Consequently, for any $x\in\R^n$ $s\in[0,\infty)$ and $\lambda\in[0,1]$.
	\begin{eqnarray*}
		&&\{t\in[0,\infty)\,:\,h_{\lambda x}(t)\geq s\Vert h_0\Vert_\infty^{1-\lambda}\Vert h_x\Vert_\infty^\lambda\}\supseteq\cr
		&&(1-\lambda) \{t\in[0,\infty)\,:\,h_0(t)\geq s\Vert h_0\Vert_\infty\}+\lambda \{t\in[0,\infty)\,:\,h_x(t)\geq s\Vert h_x\Vert_\infty\}.
	\end{eqnarray*}
	The last sets are non-empty for every $s\in[0,1]$. Thus, for any $x\in\R^n$ and $\lambda\in[0,1]$
	\begin{eqnarray*}
		\frac{g(\lambda x)}{\Vert h_0\Vert_\infty^{1-\lambda}\Vert h_x\Vert_\infty^\lambda}&=&\int_0^\infty|\{t\in[0,\infty)\,:\,h_{\lambda x}(t)\geq s\Vert h_0\Vert_\infty^{1-\lambda}\Vert h_x\Vert_\infty^\lambda\}|ds\cr
		&\geq&\int_0^1|\{t\in[0,\infty)\,:\,h_{\lambda x}(t)\geq s\Vert h_0\Vert_\infty^{1-\lambda}\Vert h_x\Vert_\infty^\lambda\}|ds\cr
		&\geq&(1-\lambda)\int_0^1|\{t\in[0,\infty)\,:\,h_0(t)\geq s\Vert h_0\Vert_\infty\}|ds\cr
		&+&\lambda\int_0^1|\{t\in[0,\infty)\,:\,h_x(t)\geq s\Vert h_x\Vert_\infty\}|ds\cr
		&=&(1-\lambda)\int_0^\infty\frac{h_0(t)}{\Vert h_0\Vert_\infty}dt+\lambda\int_0^\infty\frac{h_x(t)}{\Vert h_x\Vert_\infty}dt\cr
		&\geq&\left(\int_0^\infty\frac{h_0(t)}{\Vert h_0\Vert_\infty}dt\right)^{1-\lambda}\left(\int_0^\infty\frac{h_x(t)}{\Vert h_x\Vert_\infty}dt\right)^\lambda\cr
		&=&\left(\frac{g(0)}{\Vert h_0\Vert_\infty}\right)^{1-\lambda}\left(\frac{g(x)}{\Vert h_x\Vert_\infty}\right)^\lambda.\cr
	\end{eqnarray*}
	Since, by assumption $g(\lambda x)=g(0)^{1-\lambda} g(x)^\lambda$, all the inequalities in the last chain of inequalities are equalities and then
	\begin{itemize}
		\item $\Vert h_{\lambda x}\Vert_\infty=\Vert h_0\Vert_\infty^{1-\lambda}\Vert h_x\Vert_\infty^\lambda$,
		\item The following two sets are equal for very $s\in[0,1]$
		\begin{eqnarray*}
			&&\{t\in[0,\infty)\,:\,h_{\lambda x}(t)\geq s\Vert h_0\Vert_\infty^{1-\lambda}\Vert h_x\Vert_\infty^\lambda\}=\cr
			&&(1-\lambda) \{t\in[0,\infty)\,:\,h_0(t)\geq s\Vert h_0\Vert_\infty\}+\lambda \{t\in[0,\infty)\,:\,h_x(t)\geq s\Vert h_x\Vert_\infty\},
		\end{eqnarray*}
		\item $\int_0^\infty\frac{h_0(t)}{\Vert h_0\Vert_\infty}dt=\int_0^\infty\frac{h_x(t)}{\Vert h_x\Vert_\infty}dt$ or, equivalently
		$\frac{g(0)}{\Vert h_0\Vert_\infty}=\frac{g(x)}{\Vert h_x\Vert_\infty}$.
	\end{itemize}
	Notice that, since the sets in the second condition are intervals, if we call
	\begin{itemize}
		\item $t_1(s)=\min\{t\in[0,\infty)\,:\,h_0(t)\geq s\Vert h_0\Vert_\infty\}$
		\item $\overline{t}_1(s)=\sup\{t\in[0,\infty)\,:\,h_0(t)\geq s\Vert h_0\Vert_\infty\}$
		\item $t_2(s,x)=\min\{t\in[0,\infty)\,:\,h_x(t)\geq s\Vert h_x\Vert_\infty\}$
		\item $\overline{t}_2(s,x)=\sup\{t\in[0,\infty)\,:\,h_x(t)\geq s\Vert h_x\Vert_\infty\}$,
	\end{itemize}
	the second condition is equivalent to
	\begin{itemize}
		\item $h_{\lambda x}((1-\lambda)t_1(s)+\lambda t_2(s,x))=s\Vert h_0\Vert_\infty^{1-\lambda}\Vert h_x\Vert_\infty^\lambda$
		\item $h_{\lambda x}((1-\lambda)\overline{t}_1(s)+\lambda \overline{t}_2(s,x))=s\Vert h_0\Vert_\infty^{1-\lambda}\Vert h_x\Vert_\infty^\lambda$
	\end{itemize}
	and, since $h_0(t_1(s))=s\Vert h_0\Vert_\infty$, $h_x(t_2(s,x))= s\Vert h_x\Vert_\infty$, $h_0(\overline{t}_1(s))=s\Vert h_0\Vert_\infty$, and $h_x(\overline{t}_2(s,x))= s\Vert h_x\Vert_\infty$, the last two equalities imply that for any $s\in[0,1]$ there is equality in
	$$ |K_{(1-\lambda)t_1(s)+\lambda t_2(s,x)}\cap(\lambda x+ K_{(1-\lambda)t_1(s)+\lambda t_2(s,x)})|\geq|K_{t_1(s)}|^{1-\lambda}|K_{t_2(s,x)}\cap(x+K_{t_2(s,x)})|^\lambda$$
	and for any $s\in(0,1]$ there is equality in
	$$ |K_{(1-\lambda)\overline{t}_1(s)+\lambda \overline{t}_2(s,x)}\cap(\lambda x+ K_{(1-\lambda)\overline{t}_1(s)+\lambda \overline{t}_2(s)})|\geq|K_{\overline{t}_1(s)}|^{1-\lambda}|K_{\overline{t}_2(s,x)}\cap(x+K_{\overline{t}_2(s)})|^\lambda.$$
	
	This, implies that for any $x\in\R^n$, any $\lambda\in[0,1]$ and any $s\in(0,1]$
	\begin{itemize}
		\item $K_{t_2(s,x)}\cap(x+K_{t_2(s,x)})$ is a translation of $K_{t_1(s)}$ and also $K_{(1-\lambda)t_1(s)+\lambda t_2(s,x)}\cap(\lambda x+ K_{(1-\lambda)t_1(s)+\lambda t_2(s,x)})$ is a translation of $K_{t_1(s)}$,
		\item $K_{\overline{t}_2(s,x)}\cap(x+K_{\overline{t}_2(s,x)})$ is a translation of $K_{\overline{t}_1(s)}$ and also $K_{(1-\lambda)\overline{t}_1(s)+\lambda \overline{t}_2(s,x)}\cap(\lambda x+ K_{(1-\lambda)\overline{t}_1(s)+\lambda \overline{t}_2(s,x)})$ is a translation of $K_{\overline{t}_1(s)}$.
	\end{itemize}
	Let us remark that for $s=1$, since the function $h_x$ attains its maximum at a unique point and the value of this maximum is strictly positive, $t_1(1)=\overline{t_1}(1)$ and $t_2(1,x)=\overline{t}_2(1,x)$ for every $x\in\R^n$. Notice also that for any $x\in\R^n$ and any $\lambda\in[0,1]$, since $\Vert h_{\lambda x}\Vert_\infty=\Vert h_0\Vert_\infty^{1-\lambda}\Vert h_x\Vert_\infty^\lambda$ and $h_{\lambda x}((1-\lambda)t_1(1)+\lambda t_2(1,x))\geq \Vert h_0\Vert_\infty^{1-\lambda}\Vert h_x\Vert_\infty^\lambda$, we have that
	$$
	t_2(1,\lambda x)=(1-\lambda)t_1(1)+\lambda t_2(1,x).
	$$
	Since for every fixed $x\in\R^n$ and every $s\in(0,1]$ we have that $K_{t_2(s,x)}\cap(x+K_{t_2(s,x)})$ is a translation of $K_{t_1(s)}$,  $e^{-t_1(s)}|K_{t_1(s)}|=s\Vert h_0\Vert_\infty$, and $e^{-t_2(s,x)}|K_{t_2(s,x)}\cap(x+K_{t_2(s,x)})|=s\Vert h_x\Vert_\infty$, we obtain that for every $x\in\R^n$ and every $s\in[0,1]$
	$$
	\frac{s\Vert h_0\Vert_\infty}{e^{-t_1(s)}}=\frac{s\Vert h_x\Vert_\infty}{e^{-t_2(s,x)}}.
	$$
	Equivalently
	$$
	e^{-(t_2(s,x)-t_1(s))}=\frac{\Vert h_x\Vert_\infty}{\Vert h_0\Vert_\infty}.
	$$
	Thus, for every $x\in\R^n$, the difference $t_2(s,x)-t_1(s)$ does not depend on $s$ and $t_2(s,x)-t_1(s)=t_2(1,x)-t_1(1)$ for every $s\in[0,1]$.	
	Then, we have that for every $x\in\R^n$ and any $\lambda\in[0,1]$
	\begin{eqnarray*}
		t_2(0,\lambda x)&=&t_2(0,\lambda x)-t_1(0)=t_2(1,\lambda x)-t_1(1)=\lambda(t_2(1,x)-t_1(1))\cr
		&=&\lambda(t_2(0,x)-t_1(0))=\lambda t_2(0,x).
	\end{eqnarray*}
	Consequently, since for any $x\in\R^n$
	\begin{eqnarray*}
		t_2(0,x)&=&\inf\{t\in[0,\infty)\,:\,K_t\cap(x+K_t)\neq\emptyset\}\cr
		&=&\inf\{t\in[0,\infty)\,:\,x\in K_t-K_t\},
	\end{eqnarray*}
	we have that for any $x\in\R^n$ and any $\lambda\in[0,1]$
	$$
	\inf\{t\in[0,\infty)\,:\,\lambda x\in K_t-K_t\}=\lambda\inf\{t\in[0,\infty)\,:\,x\in K_t-K_t\}.
	$$
	Fixing $t_0\in[0,\infty)$ we have that for every $x\in\partial (K_{t_0}-K_{t_0})$ $\inf\{t\in[0,\infty)\,:\,x\in K_t-K_t\}=t_0$.
Otherwise, $\inf\{t\in[0,\infty):x\in K_t-K_t\}\leq t_1<t_0$. Since $\lambda x\notin K_{t_0}-K_{t_0}$ for every $\lambda> 1$,
we have that $\inf\{t\in[0,\infty):\lambda x\in K_t-K_t\}\geq t_0$, and thus
\[
\begin{split}
t_0 & \leq\inf\{t\in[0,\infty):\lambda x\in K_t-K_t\}\\
    & =\lambda\inf\{t\in[0,\infty):x\in K_t-K_t\}\\
    & \leq\lambda t_1,
\end{split}
\]
for every $\lambda>1$, a contradiction since $t_1<t_0$.
Thus, for any $\lambda\in[0,1]$, $\lambda x\in\partial(K_{\lambda t_0}-K_{\lambda t_0})$ and then for every $t\in[0,\infty)$ and any $\lambda\in[0,1]$
	$$
	\lambda K_t-\lambda K_t=\lambda(K_t-K_t)=K_{\lambda t}-K_{\lambda t}.
	$$
	Since $f$ is log-concave and $\Vert f\Vert_\infty=f(0)$ we have that $0\in K_t$ for every $t\in[0,\infty)$ and for every $t\in[0,\infty)$ and any $\lambda\in[0,1]$ $\lambda K_t\subseteq K_{\lambda t}$ and then
	$$
	\lambda K_t-\lambda K_t\subseteq K_{\lambda t}-K_{\lambda t}.
	$$
	Since the last inclusion is an equality and $\lambda K_t\subseteq K_{\lambda t}$ we obtain that $\lambda K_t=K_{\lambda t}$. Therefore, for every $t\in[0,\infty)$ $K_t=tK_1$ and $\frac{f(x)}{\Vert f\Vert_\infty}= e^{-\Vert x\Vert_{K_1}}$. Let us call $K:=K_1$. Since $t_1(0)=0$ and for every $x\in\R^n$
	\begin{eqnarray*}
		t_2(0,x)&=&\inf\{t\in[0,\infty)\,:\,K_t\cap(x+K_t)\neq\emptyset\}\cr
		&=&\inf\{t\in[0,\infty)\,:\,tK\cap(x+tK)\neq\emptyset\}\cr
		&=&\inf\{t\in[0,\infty)\,:\,x\in tK-tK\}\cr
		&=&\Vert x\Vert_{K-K},
	\end{eqnarray*}
	we have that for every $x\in\R^n$
	$$
	g(x)=g(0)\frac{\Vert h_x\Vert_\infty}{\Vert h_0\Vert_\infty}=g(0)e^{-(t_2(0,x)-t_1(0))}=g(0)e^{-\Vert x\Vert_{K-K}}.
	$$
	Therefore,
	$$
	\int_{\R^n}g(x)=(n!)^2|K||K-K|.
	$$
	On the other hand, if $\frac{f(x)}{\Vert f\Vert_\infty}=e^{-\Vert x\Vert_K}$
	\begin{eqnarray*}
		\int_{\R^n}g(x)dx&=&\int_{\R^n}\int_{\R^n} \min\{e^{-\Vert x\Vert_K},e^{-\Vert y\Vert_K}\}dydx\cr
		&=&\int_{\R^{2n}}e^{-\Vert (x,y)\Vert_{K\times K}}dydx\cr
		&=&(2n)!|K|^2.
	\end{eqnarray*}
	Thus
	$$
	{\binom{2n}{n}}|K|=|K-K|
	$$
	and, since we have equality in Rogers-Shephard inequality \cite{RS}, $K$ is a simplex and $0\in K=K_1$.
\end{proof}

Lemma \ref{lem:g(tx)=g(x)^t} and Lemma \ref{lem:equality} together characterize the equality case.


\end{document}